\title{
	{Character polynomials for two rows and hook partitions}\\
	{\large University of Western Ontario}
	{}
}
\author{Ahmed Umer Ashraf}
\date{\today}
\begin{document}

\maketitle

\begin{abstract}
Representation theory of the symmetric group $\mathfrak{S}_n$ has a very distinctive combinatorial flavor. The conjugacy classes as well as the irreducible characters are indexed by integer partitions  $\lambda \vdash n$. We introduce class functions on $\mathfrak{S}_n$ that count the number of certain tilings of Young diagrams. The counting interpretation gives a uniform expression of these class functions in the ring of character polynomials, as defined by \cite{murnaghanfirst}. A modern treatment of character polynomials is given in \cite{orellana-zabrocki}. We prove a relation between these combinatorial class functions in the (virtual) character ring. From this relation, we were able to prove Goupil's generating function identity \cite{goupil}, which can then be used to derive Rosas' formula \cite{rosas} for Kronecker coefficients of hook shape partitions and two row partitions.                                     
\end{abstract}

\tableofcontents

\section{Introduction}
    
Representation theory of the symmetric group $\mathfrak{S}_n$ employs a good amount of combinatorics of (integer) partitions of $n$. The irreducible representations of $\mathfrak{S}_n$ are indexed by partitions. One way to generate these irreducible representations is through constructing a vector space $M^\lambda$ generated by equivalence classes of tableaux, called \emph{tabloids} of shape $\lambda$. And then show that each $M^\lambda$ contains an irreducible representation of $\mathfrak{S}_n$ as a subspace. The number of tabloids of shape $\lambda$ can also be viewed as certain tilings of Young diagram of shape $\lambda$. This motivates us to define class functions over $\mathfrak{S}_n$ that count certain tilings we call \emph{brick tilings}. In this section, we review representations theory of $\mathfrak{S}_n$, and define these class functions. We also recall Doubilet's inversion formula, and face numbers of permutohedron which will be of use in the later sections.

    \subsection{Partitions and compositions}
        A \emph{partition} $\lambda$ of a positive integer $n$, denoted as $\lambda \vdash n$, is a weakly decreasing sequence $\lambda = (\lambda_1, \lambda_2, \cdots, \lambda_r )$ of positive integers adding up to $n$. The positive integers $\lambda_i$ are called \emph{parts} of $\lambda$, and the number of parts is called the \emph{length} of $\lambda$, denoted as $\ell(\lambda)$. If we want to emphasize that $\ell(\lambda) = r$, we write $\lambda \vdash_r n$. In relation to $\lambda$, the integer $n$ is called the \emph{weight} of $\lambda$, and it is denoted by $|\lambda|$. It is also useful to write $\lambda = (1^{m_1}, 2^{m_2}, \dots, n^{m_n})$, where $m_i$ denote the multiplicity of $i$ in the partition $\lambda$. Given a partition $\lambda \vdash n$, the \emph{reduced partition} $\left< \lambda \right>$ is a partition of $n-\lambda_1$ defined as $\left< \lambda \right> =(\lambda_2, \dots, \lambda_r) \vdash |\lambda| - \lambda_1$. Similarly, for a partition $\lambda \vdash k$, and a positive integer $n \geq k + \lambda_1$, the \emph{augmented partition} $\lambda[n]$ is a partition of $n$ defined as $\lambda[n] =(n-k, \lambda_1, \cdots, \lambda_{r})$. In other words, the \emph{reduced partition} $\left< \lambda \right>$ is a partition we get by removing the first part of $\lambda$, while the augmented partition is a partition we get by augmenting a suitable first part to $\lambda$. We identify partition $\lambda$ with its Young diagram, which is a finite collection of unit cells arranged in left justified rows with $\lambda_i$ cells in the $i$th row. A Young tableau of shape $\lambda$, is a labeling of the cells of the Young diagram of $\lambda$ with integers $1, 2, \dots, n$, with each number occurring exactly once.

 A composition $\mu$ of a positive integer $n$, denoted as $\mu \vDash n$, is a sequence $(\mu_1, \mu_2, \cdots, \mu_r)$ of positive integers adding to $n$. We extend the definitions and notation introduced above for partitions to compositions. Given a composition $\mu$, we denote by $\tilde{\mu}$ the partition obtained by rearranging the parts of $\mu$ in weakly decreasing order.

For any positive integer $n$, let $\Comp(n)$ denote the set of all compositions of $n$. We define a partial order on $\Comp(n)$ in the following manner: Given two compositions $\nu =(\nu_1,\dots, \nu_s)$ and $\mu=(\mu_1,\dots,\mu_t)$ in $\Comp(n)$, we say that $\mu$ covers $\nu$, and write $\nu \lessdot \mu$ if $\ell(\mu)=\ell(\nu) +  1$ and there exists a unique $j$ such that
\begin{align*}
\mu_i &= \begin{cases}
\nu_i~~~&\text{for}~ i<j \\
\nu_{i} + \nu_{i+1}~~~&\text{for}~i=j \\
\nu_{i+1}~~~&\text{for}~i > j
\end{cases}
\end{align*}
i.e. the covering relations are given by adding adjacent entries. The partial order on $\Comp(n)$ induced by this relation is denoted by $\leq$. For instance, in figure \ref{composet5} we see the Hasse diagram of this partial order on $\Comp(5)$.

\begin{figure}
    \centering
    \begin{tikzpicture}[scale=.4]
  \node (a) at (20, 0) {\ytableausetup{boxsize=0.5em}\ydiagram{1,1,1,1,1}};
 \node at (20, -2) {\tiny{(1, 1, 1, 1, 1)}};
  \node (b) at (12,6) {\ytableausetup{boxsize=0.5em}\ydiagram{2,1, 1,1}};
 \node at (12, 4) {\tiny{(2, 1, 1, 1)}};
  \node (c) at (18,6) {\ytableausetup{boxsize=0.5em}\ydiagram{1,2,1,1}};
 \node at (18, 4) {\tiny{(1, 2, 1, 1)}};
  \node (d) at (24,6)  {\ytableausetup{boxsize=0.5em}\ydiagram{1,1,2,1}};
 \node at (24, 4) {\tiny{(1, 1, 2, 1)}};
  \node (e) at (30,6)  {\ytableausetup{boxsize=0.5em}\ydiagram{1,1,1,2}};
 \node at (30,4) {\tiny{(1, 1, 1, 2)}};
  \node (f) at (30,12) {\ytableausetup{boxsize=0.5em}\ydiagram{1,2,2}};
 \node at (30, 10)  {\tiny{(1,2,2)}};
  \node (g) at (6,12)  {\ytableausetup{boxsize=0.5em}\ydiagram{3,1,1}};
 \node at (6,10) {\tiny{(3,1,1)}};
  \node (h) at (24,12)  {\ytableausetup{boxsize=0.5em}\ydiagram{1,3,1}};
 \node at (24, 10) {\tiny{(1,3,1)}};
  \node (i) at (36,12)  {\ytableausetup{boxsize=0.5em}\ydiagram{1,1,3}};
 \node at (36, 10) {\tiny{(1,1,3)}};
  \node (j) at  (12,12)    {\ytableausetup{boxsize=0.5em}\ydiagram{2,2,1}};
 \node at  (12, 10)  {\tiny{(2,2,1)}};
  \node (k) at  (18,12)  {\ytableausetup{boxsize=0.5em}\ydiagram{2,1,2}};
 \node at  (18, 10) {\tiny{(2,1,2)}};
  \node (l) at (12,18)  {\ytableausetup{boxsize=0.5em}\ydiagram{4,1}};
 \node at (12, 16) {\tiny{(4,1)}};
  \node (m) at  (30,18) {\ytableausetup{boxsize=0.5em}\ydiagram{1,4}};
 \node at (30, 16) {\tiny{(1, 4)}};
  \node (o) at (18,18)    {\ytableausetup{boxsize=0.5em}\ydiagram{3,2}};
 \node at (18, 16) {\tiny{(3,2)}};
  \node (n) at   (24,18) {\ytableausetup{boxsize=0.5em}\ydiagram{2,3}};
 \node at (24, 16)  {\tiny{(2,3)}};
  \node (p) at (20,24)  {\ytableausetup{boxsize=0.5em}\ydiagram{5}};
 \node at (20, 22) {\tiny{(5)}};
   \draw (a)  -- (b); \draw  (a) -- (c) ;\draw  (a) -- (d) ; \draw  (a) -- (e);
  \draw (b)  -- (g); \draw  (b) -- (j) ;\draw  (b) -- (k) ;
  \draw (c)  -- (f); \draw (c)  -- (g); \draw (c)  -- (h); 
  \draw (d)  -- (h); \draw (d)  -- (i); \draw (d)  -- (j);
 \draw  (e) -- (f) ;\draw  (e) -- (k) ;\draw  (e) -- (i) ;
\draw  (f) -- (m) ;\draw  (f) -- (o) ;
  \draw (g)  -- (l); \draw (g)  -- (o);
   \draw (h)  -- (l); \draw (h)  -- (m);
   \draw (i)  -- (m);  \draw (i)  -- (n);
   \draw (j)  -- (l); \draw (j)  -- (n); 
   \draw (k)  -- (n); \draw (k)  -- (o); 
   \draw (l)  -- (p); \draw (m)  -- (p); \draw (n)  -- (p); \draw(o) --(p);
\ytableausetup{boxsize=1em}
\end{tikzpicture}
    \caption{Poset of composition $\Comp(5)$}
    \label{composet5}
\end{figure}
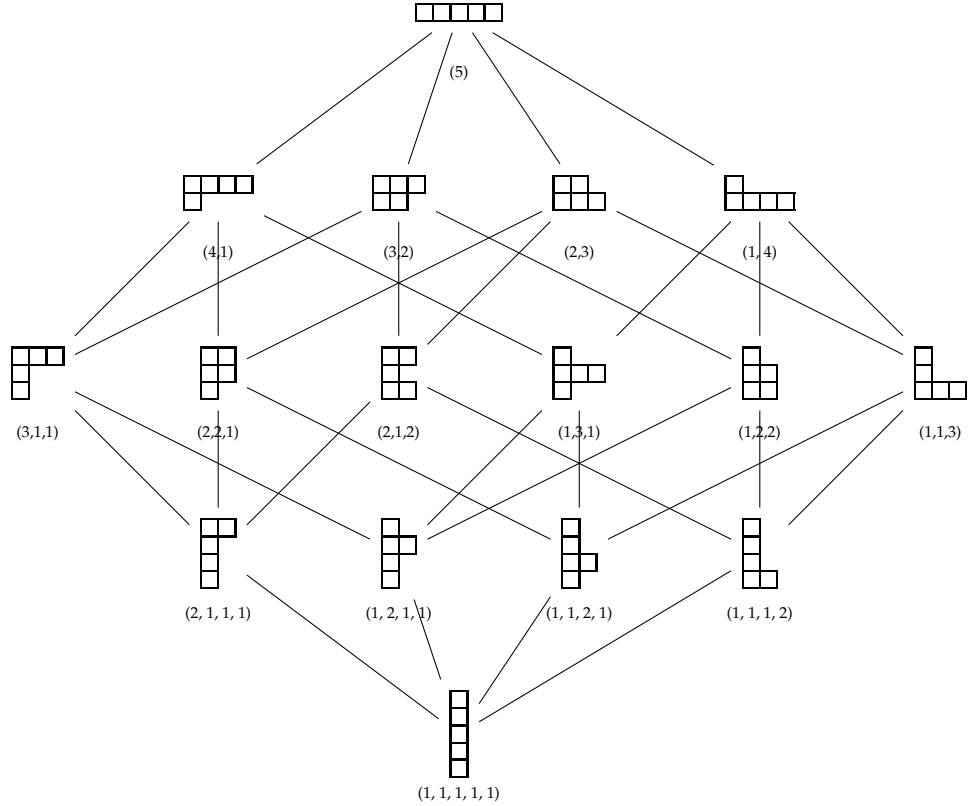

%
%

We identify the abstract group $\mathfrak{S}_n$ with the group of permutations on the set $[n] = \{1, 2, \dots, n\}$. Let $w$ be an element of $\mathfrak{S}_n$, then $w$ can be written as a product of pairwise disjoint cycles, called \emph{cyclic factors} of $w$. Let $r$ denote the number of these cyclic factors including the fixed points (1-cycles). Let $l_i$ be their lengths for $i = 1, \dots, r$. By choosing an element $j_i$ for the $i$-th cyclic factor, we can write 
\begin{align*}
w &= \prod_{i=1}^{r} (j_i, w j_i, \cdots, w^{l_i -1} j_i)
\end{align*}
We can make this notation unique by choosing $j_i$ such that for all positive integer $m$,
\begin{align*}
j_i \geq w^m j_i
\end{align*}
and for all $i = 1, 2, \dots, r-1$, take
\begin{align*}
j_i < j_{i+1}
\end{align*}
Such a unique decomposition is called \emph{canonical cycle decomposition} of $w$. This plays an important role in Foata's first fundamental bijection \cite{foata-schutzenberger}. Note that $l = (l_1, l_2, \cdots, l_r)$ is a composition of $n$. The underlying partition $\tilde{l}$ is called the \emph{cycle type} of $w$ and is denoted as $\cyc(w)$. We know that two permutations $u, w$ belong to the same conjugacy class in $\mathfrak{S}_n$ if and only if $\cyc(u) = \cyc(w)$. 
\begin{example}
If $w = 947213865 \in \mathfrak{S}_9 $, then we have the canonical cycle decomposition 
\begin{align*}
w &=(4,2)(8, 6, 3, 7)  (9, 5,1)
\end{align*}
with $\cyc(w) = (4,3,2) \vdash 9$.
\end{example}

    \subsection{Representation theory of symmetric group $\mathfrak{S}_n$}
        
A tabloid $[t]$ of shape $\lambda$ is an equivalence class of Young tableaux of shape $\lambda$, where we consider two tableaux $t$ and $t'$ equivalent if the entries in each row of $t$ agrees with the corresponding entries in row of $t'$.  
Given the set $T(\lambda)$ of all Young tableaux of shape $\lambda$, there is a natural action of $\mathfrak{S}_n$ on $T(\lambda)$ by just permuting the labels of tableaux.  This induces an action on tabloids. Given a Young tableau $t$, the polytabloid $e_t$ associated to $t$ is defined as the linear combination
\begin{align*}
e_t &:= \sum_{\pi \in C_t} \sgn(\pi) \pi[t] 
\end{align*}
where $C_t$ is the column group associated to $t$, i.e. the subgroup of $\mathfrak{S}_n$ consisting of permutations that only permute elements within each column of $t$. For each partition $\lambda \vdash n$, $\mathbb{C}$-linear combination of polytabloids of shape $\lambda$ gives an irreducible representation of $\mathfrak{S}_n$ over $\mathbb{C}$. This is referred as Specht module $S^\lambda$ corresponding to $\lambda \vdash n$ in the literature \cite{steinberg}. Let $\Cl(\mathfrak{S}_n)$ denote the vector space of class functions on the group $\mathfrak{S}_n$ over $\mathbb{C}$. The characters of Specht modules, $(\chi^\lambda)_{\lambda \vdash n}$, gives a basis for $\Cl(\mathfrak{S}_n)$. There is a scalar product $\left< \cdot , \cdot \right>_{\mathfrak{S}_n}$ on $\Cl(\mathfrak{S}_n)$ defined as
\begin{align*}
\left< \chi^\lambda, \chi^\mu \right> &= \frac{1}{n!} \sum_{\sigma \in \mathfrak{S}_n}  \chi^\lambda(\sigma) \chi^\mu(\sigma)
\end{align*}
and extended linearly. The decomposition of the permutation character in terms of the irreducible character basis $\{\chi^\mu\}$ is given by Young's rule, which gives:
\begin{equation} \label{kostka}
\zeta^\lambda = \sum_{\mu \unlhd \lambda} K_{\mu \lambda} \chi^\mu
\end{equation}
where $K_{\mu\lambda}$, are the Kostka numbers, and the sum is over all partitions $\mu$ which are less than or equal to $\lambda$ in the \emph{dominance order}.
    \subsection{Brick tilings}

A \emph{brick} of length $j$ is a labelled horizontal array of $j$ unit cells. We will view it as a $1 \times j$ rectangle. To each $j$-cycle $a = (a_1 a_2 \cdots a_j)$ in the canonical cycle decomposition of $w$, we can associate a brick of length $j$ with $i$th square labelled $a_i$.  Given $w \in \mathfrak{S}_n$ where $w$ has cycle type $\lambda = (\lambda_1, \cdots, \lambda_r) \vdash n$, we denote by $B_w$ the set of associated bricks of length $\lambda_1, \cdots, \lambda_r$ corresponding to each cyclic factor in the canonical cycle decomposition of $w$. Note that the $1$-cycles correspond to $1 \times 1$ square bricks. A \emph{tiling} of a diagram $\lambda \vdash n$ by a set of bricks $B$ is a covering of the diagram $\lambda$ with bricks from $B$ such that no brick is used twice and each cell of $\lambda$ is covered by some brick from $B$. 
An \emph{ordered brick tiling} of $\lambda \vdash k$ (or $\lambda \vDash k$) by $w \in \mathfrak{S}_n$ is a tiling of Young diagram of $\lambda$ by bricks from $B_w$, where no brick is in more than one row and the order of the bricks in a row is irrelevant. To be more precise, the ordered brick tiling of shape $\lambda = (\lambda_1, \cdots, \lambda_{k})$ with $w \in \mathfrak{S}_n$ is an ordered tuple $(S_1, S_2, \cdots, S_r)$ of disjoint subsets of $B_w$ such that
\begin{align*}
    \bigg| \bigcup_{U \in S_j} U \bigg| = \lambda_j
\end{align*}
for all $j=1, \cdots, r$. The set of brick tilings of $\lambda \vdash n$ by $w \in \mathfrak{S}_n$ is denoted by $B_w(\lambda)$. So in each element $(S_1, S_2, \dots, S_r)$ of $B_w(\lambda)$ with $\ell(\lambda) = r$, the set $S_i$ represent the set of bricks used to tile the $i$-th row of Young diagram of $\lambda$. Notice since the order of tiles in a row does not matter, therefore there is no ambiguity in this notation. If need be, we write $T_i$ explicitly using the canonical cycle decomposition.

We can define an equivalence relation among brick tilings of shape $\lambda$ as follows: Two brick tilings $S = (S_1, S_2, \dots, S_r)$ and $T = (T_1, T_2, \dots, T_r)$ of  partition $\lambda \vdash_r n$, are equivalent if one is a permutation of other i.e.
\begin{align*}
    (S_1, \cdots, S_r) &= (T_{\pi(1)}, \cdots, T_{\pi(r)})~~\text{for some}~\pi \in \mathfrak{S}_r
\end{align*}
We refer to these equivalence classes of tilings as \emph{unordered brick tilings} of $\lambda$ by $w$, and we denote the set of these equivalence classes by $\widetilde{B}_{w}(\lambda)$. We say a brick tiling is \emph{crackless} whenever we have exactly one tile in each row. Otherwise, we say it is \emph{cracked}. A \emph{crack} in a brick tiling is the occurrence of two tiles in one row of a Young diagram. If a row contains $c$ many tiles, we say it has $c-1$ cracks, and the number of cracks in a brick tiling is sum of number of cracks in its rows. For a $T$ in $\widetilde{B}_w(\lambda)$ (resp. $B_w(\lambda)$), we call $\lambda$ the \emph{shape of $T$} and denote it by $\sh(T)$. Furthermore, for any subset $A = \{ b_1, b_2, \dots, b_r\}$ of $B_w$, the \emph{shape of $A$}, denoted as $\sh(A)$ is the sequence of lengths of bricks $b_i$ in decreasing order.  

\begin{example} \label{brickexample}
Consider $\lambda = (2, 2, 1) \vdash 5$ and let $u = (3,1)(4)(5,2)$ and $w = (2)(3,1)(4)(5)$ in $\mathfrak{S}_5$. We have $B_u = \left\{\ytableaushort{31}*[*(red!20)]{2} , \ytableaushort{4}*[*(green!20)]{1},  \ytableaushort{52}*[*(blue!20)]{2} \right\}$ and
 $B_w = \left\{\ytableaushort{2}*[*(blue!20)]{1},  \ytableaushort{31}*[*(red!20)]{2} , \ytableaushort{4}*[*(green!20)]{1},  \ytableaushort{5}*[*(yellow!50)]{1} \right\}$. The diagram of $\lambda$ is given by \ydiagram[]{2,2,1}. The ordered and unordered tilings of $\lambda$ by $u$ and $w$ are given below
\begin{align*}
\widetilde{B}_u(\lambda) &= \left\{ \ytableaushort{31,52,4}*[*(red!20)]{2} *[*(blue!20)]{0,2}*[*(green!20)]{0,0,1}, \ytableaushort{52,31,4}*[*(blue!20)]{2} *[*(red!20)]{0,2}*[*(green!20)]{0,0,1} \right\} \\
\widetilde{B}_w(\lambda) &= \left\{ \ytableaushort{31,24,5}*[*(red!20)]{2} *[*(blue!20)]{0,1}*[*(green!20)]{0,2}*[*(yellow!50)]{0,0,1} , \ytableaushort{31,25,4}*[*(red!20)]{2} *[*(blue!20)]{0,1}*[*(green!20)]{0,0,1}*[*(yellow!50)]{0,2}, \ytableaushort{31,45,2}*[*(red!20)]{2} *[*(blue!20)]{0,0,1}*[*(green!20)]{0,1}*[*(yellow!50)]{0,2}, 
\ytableaushort{24,31,5}*[*(red!20)]{0,2} *[*(blue!20)]{1}*[*(green!20)]{2}*[*(yellow!50)]{0,0,1} , \ytableaushort{25,31,4}*[*(red!20)]{0,2} *[*(blue!20)]{1}*[*(green!20)]{0,0,1}*[*(yellow!50)]{2}, \ytableaushort{45,31,2}*[*(red!20)]{0,2} *[*(blue!20)]{0,0,1}*[*(green!20)]{1}*[*(yellow!50)]{2}   \right\}
\end{align*}
and
\begin{align*}
{B}_u(\lambda) &= \left\{  \left[ \ytableaushort{31,52,4}*[*(red!20)]{2} *[*(blue!20)]{0,2}*[*(green!20)]{0,0,1} \right] \right\} \\
{B}_w(\lambda) &= \left\{ \left[ \ytableaushort{31,24,5}*[*(red!20)]{2} *[*(blue!20)]{0,1}*[*(green!20)]{0,2}*[*(yellow!50)]{0,0,1} \right], \left[ \ytableaushort{31,25,4}*[*(red!20)]{2} *[*(blue!20)]{0,1}*[*(green!20)]{0,0,1}*[*(yellow!50)]{0,2} \right], \left[ \ytableaushort{31,45,2}*[*(red!20)]{2} *[*(blue!20)]{0,0,1}*[*(green!20)]{0,1}*[*(yellow!50)]{0,2} \right] \right\}.
\end{align*}
Note that all the elements of $\widetilde{B}_u(\lambda)$ are crackless, but all the elements of $\widetilde{B}_w(\lambda)$ are cracked with one crack each. 
\end{example}
    \subsection{Tiling class functions}
        
In this section, we define class functions on $\mathfrak{S}_n$ that count the number of different types of brick tilings. Let $k$ be a positive integer less than or equal to $n$ and let $\lambda \vdash k$ (or $\lambda \vDash k$). We define functions $\zeta^\lambda, \xi^\lambda, \eta^\lambda: \mathfrak{S}_n \longrightarrow \mathbb{N}$ as follow 
\begin{align*}
\zeta^\lambda(w) &:= ~\text{number of ordered brick tilings of $\lambda$ by $w$ i.e. ~}|\widetilde{B}_w(\lambda)| \\
\xi^\lambda(w) &:=  ~\text{number of unordered brick tilings of $\lambda$ by $w$ i.e.~} |{B}_w(\lambda)| \\
\eta^\lambda(w) &:=~\text{number of unordered crackless brick tilings of $\lambda$ by $w$}
\end{align*}
These are class functions on $\mathfrak{S}_n$. What that means is that they are constant on each conjugacy class of $\mathfrak{S}_n$. The notion of tiling of $\lambda \vdash k$ with $w \in \mathfrak{S}_n$ when $k \neq n$ still make sense. For $k > n$, all of the above class functions are identically zero, so we keep the condition $k \leq n$. 

\begin{example} \label{classexample}
Going back to example \ref{brickexample}, for the respective partition $\lambda$ and permutations $u, w$, we have
\begin{table}[h]
    \centering
    \begin{tabular}{cc}
       $ \zeta^\lambda(u) = 2$  & $\zeta^\lambda(w) = 6$ \\
        $\xi^\lambda(u) = 1$ & $  \xi^\lambda(w) = 3$ \\
        $\eta^\lambda(u) = 1 $ & $\eta^\lambda(w) = 0$
    \end{tabular}
    \caption{Combinatorial class functions for example \ref{brickexample}}
    \label{combchar}
\end{table}
\end{example}

The interesting case is when $k=n$. This means that all of the bricks from $B_w$ are utilized to cover diagram $\lambda \vdash n$. So the ordered brick tilings correspond precisely to tabloids, and hence we have the following result.

\begin{theorem}
For $\lambda \vdash n$, 
\begin{itemize}
\item
$\zeta^\lambda$ is the character corresponding to the permutation representation $M^\lambda$ of $\mathfrak{S}_n$. Furthermore,
\begin{align*}
\xi^\lambda &= \frac{1}{\lambda!} \zeta^\lambda 
\end{align*}
where $\lambda! := m_1 ! m_2! \cdots $ for $\lambda = (1^{m_1}, 2^{m_2}, \cdots)$.

\item
$\eta^\lambda$ is the indicator function of cycle structure i.e.
\begin{align*}
\eta^\lambda(w) &= \begin{cases}
1 &\text{if}~~\cyc(w) = \lambda \\
0 &\text{otherwise}
\end{cases}
\end{align*}

\end{itemize}
\end{theorem}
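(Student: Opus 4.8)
The plan is to read each of the three counts either as a fixed-point count of $w$ acting on a basis, or as an orbit count for a suitable group action, and then to identify it with the claimed representation-theoretic quantity.

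For the statement about $\zeta^\lambda$, I would begin from the elementary fact that the value of a permutation character at $w$ equals the number of basis elements fixed by $w$. The basis of $M^\lambda$ is the set of tabloids of shape $\lambda$, equivalently the ordered set partitions $(R_1,\dots,R_r)$ of $[n]$ with $|R_i|=\lambda_i$. Because $w$ acts by relabelling, it fixes such a tabloid exactly when every block satisfies $w(R_i)=R_i$, that is, when each $R_i$ is $w$-invariant. The crucial observation is that a subset of $[n]$ is $w$-invariant if and only if it is a union of cycles of $w$, and the cycles of $w$ are precisely the bricks of $B_w$. Hence a $w$-fixed tabloid carries exactly the data of an ordered partition $(S_1,\dots,S_r)$ of $B_w$ with $\bigl|\bigcup_{U\in S_i}U\bigr|=\lambda_i$, which is an ordered brick tiling of $\lambda$ by $w$. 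Writing $\chi_{M^\lambda}$ for the character of $M^\lambda$, this bijection yields $\zeta^\lambda(w)=\chi_{M^\lambda}(w)$ for every $w$, so $\zeta^\lambda$ is the character of $M^\lambda$.

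For the identity $\xi^\lambda=\frac{1}{\lambda!}\zeta^\lambda$, I would let the group $G=\prod_i \mathfrak{S}_{m_i}$ of order $\lambda!=\prod_i m_i!$ act on ordered brick tilings by permuting, within each length class, the rows of $\lambda$ of that length; as the definition of the equivalence relation forces any row-permutation relating two tilings of shape $\lambda$ to preserve row lengths, the orbits of $G$ are precisely the unordered brick tilings. The key point is that this action is free: the bricks are pairwise distinct labelled objects, the blocks $S_i$ are disjoint, and every $\lambda_i$ is positive, so two distinct rows can never hold the same set of bricks and no non-identity element of $G$ can fix a tiling. Every orbit therefore has size exactly $\lambda!$, giving $\zeta^\lambda(w)=\lambda!\,\xi^\lambda(w)$. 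For $\eta^\lambda$, I would note that a crackless tiling puts exactly one brick in each row, so it is a matching of the bricks of $B_w$ with the rows of $\lambda$ in which the brick filling row $i$ has length $\lambda_i$. Such a matching exists if and only if the multiset of brick lengths, which is exactly $\cyc(w)$, equals the multiset of parts of $\lambda$; thus $\eta^\lambda(w)=0$ unless $\cyc(w)=\lambda$. When $\cyc(w)=\lambda$, the ordered crackless tilings are counted by permuting the $m_i$ bricks of each length $i$ among the $m_i$ rows of that length, giving $\prod_i m_i!=\lambda!$ ordered tilings, and the same free $G$-action collapses them into a single unordered tiling, so $\eta^\lambda(w)=1$.

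The step deserving the most care is the freeness of the $G$-action invoked for the last two parts: one must check that no non-trivial permutation of equal-length rows can fix a tiling, which rests on the bricks being distinct labelled objects and on every row being non-empty. Granting freeness, the orbit counts are immediate, and the remainder of the argument is bookkeeping.
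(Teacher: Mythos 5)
Your proof is correct, and it reaches the same conclusions the paper does, but the route for the first bullet is genuinely different. The paper identifies $\zeta^\lambda(w)$ with the character of $M^\lambda$ by quoting the classical product formula: $\chi_{M^\lambda}(w)$ is the coefficient of $x_1^{\lambda_1}\cdots x_\ell^{\lambda_\ell}$ in $\prod_i (x_1^{\mu_i}+\cdots+x_\ell^{\mu_i})$, and then observes that extracting that coefficient is exactly the count of ordered brick tilings. You instead compute the permutation character from first principles as the number of tabloids fixed by $w$, and exhibit a bijection between $w$-fixed tabloids and ordered brick tilings via the observation that a $w$-invariant row must be a union of cycles of $w$. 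Your version is more self-contained (it needs no prior formula, only the fixed-point description of a permutation character) and it makes the bijection explicit, whereas the paper's is shorter because it leans on a known identity; the two are of course two faces of the same computation. For the remaining claims the approaches coincide in substance: the paper asserts that each unordered tiling corresponds to exactly $\lambda!$ ordered ones ``by permuting the tilings in the parts of same size,'' and your contribution is to justify the \emph{exactly} by noting that the action of $\prod_i \mathfrak{S}_{m_i}$ on ordered tilings is free because the row-sets $S_i$ are disjoint and nonempty. That freeness check is the one point the paper glosses over, so your write-up is, if anything, slightly more careful; the $\eta^\lambda$ argument (a crackless tiling is a length-preserving matching of cycles to rows, which exists iff $\cyc(w)=\lambda$ and is then unique up to reordering) matches the paper's.
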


\begin{proof}
Recall that for a partition $\lambda = (\lambda_1, \cdots, \lambda_\ell) \vdash n$ and $w \in \mathfrak{S}_n$ with $\cyc(w) = \mu = (\mu_1, \cdots, \mu_r)$, the character of $\mathfrak{S}_n$ corresponding to $M^\lambda$ is the coefficient of $x_1^{\lambda_1} x_2^{\lambda_2} \cdots x_\ell^{\lambda_\ell}$ in the product
\begin{align*}
    \prod_{i=1}^r (x_1^{\mu_i} + \cdots + x_\ell^{\mu_i})
\end{align*}
which precisely counts the ordered brick tilings of $w$ by $\lambda$. Furthermore, to each unordered brick tiling of $\lambda$ we get $\lambda! = m_1 ! m_2! \cdots$ ordered brick tilings by permuting the tilings in the parts of same size, and vice versa. 
For the second part, a crackless tiling of $\lambda$ is only possible if each part of $\cyc(w)$ fits perfectly in a unique part of $\lambda$. This is just another way of saying that $\cyc(w) = \lambda$.

\end{proof}

\begin{lemma}
Let $k \leq n$, then 
\begin{itemize}
\item for any $\mu \vDash k$ the following holds as  identities of class functions on $\mathfrak{S}_n$.
\begin{align*}
\zeta^\mu &= \zeta^{\tilde{\mu}} ~~,~~  \xi^\mu = \xi^{\tilde{\mu}} ~~,~~  \eta^\mu = \eta^{\tilde{\mu}} 
\end{align*}
\item
for any $\lambda \vdash n$ then the following holds as an identity of class functions on $\mathfrak{S}_n$.
\begin{align*}
\zeta^\lambda &= \zeta^{\left< \lambda \right>} 
\end{align*}
and if $\lambda_1 > \lambda_2$, then  $\xi^\lambda = \xi^{\left< \lambda \right>}$ and  $\eta^\lambda = \eta^{\left< \lambda \right>}$.
\end{itemize}
\end{lemma}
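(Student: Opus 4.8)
The plan is to prove each identity by exhibiting an explicit bijection between the relevant tilings that preserves the length of every row, and then checking that it is compatible with whatever extra structure is being counted (the row-permuting equivalence for $\xi$, and the one-brick-per-row condition for $\eta$). For the first bullet, let $\sigma \in \mathfrak{S}_r$ be the permutation sorting $\mu$ into weakly decreasing order, so that $\tilde{\mu}_j = \mu_{\sigma(j)}$. Sending an ordered tiling $(S_1, \dots, S_r)$ of $\mu$ to $(S_{\sigma(1)}, \dots, S_{\sigma(r)})$ produces an ordered tiling of $\tilde{\mu}$, since the $j$-th row now has $\mu_{\sigma(j)} = \tilde{\mu}_j$ cells; this map is manifestly invertible, giving $\zeta^\mu = \zeta^{\tilde{\mu}}$. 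Because it merely relabels rows, it carries the row-permuting equivalence of $\mu$ to that of $\tilde{\mu}$ and preserves the number of bricks in each row, so it descends both to unordered classes and to crackless tilings, yielding $\xi^\mu = \xi^{\tilde{\mu}}$ and $\eta^\mu = \eta^{\tilde{\mu}}$ simultaneously.

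For the identity $\zeta^\lambda = \zeta^{\langle \lambda \rangle}$ the decisive fact is that $|\lambda| = n$, so that $B_w$ contains exactly $n$ cells in total. I would use the drop-first-row map $(S_1, S_2, \dots, S_r) \mapsto (S_2, \dots, S_r)$. Given any ordered tiling of $\langle \lambda \rangle$, the bricks it uses cover $|\langle \lambda \rangle| = n - \lambda_1$ cells, so the remaining bricks of $B_w$ cover exactly $\lambda_1$ cells; because the order of bricks within a row is irrelevant, these remaining bricks tile a single row of length $\lambda_1$ in one and only one way. Restoring this row is thus inverse to dropping it, and the bijection yields $\zeta^\lambda = \zeta^{\langle \lambda \rangle}$.

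To obtain $\xi^\lambda = \xi^{\langle \lambda \rangle}$ under the hypothesis $\lambda_1 > \lambda_2$, I would restrict the same map to equivalence classes. The hypothesis makes the top row the unique longest row, so the part $\lambda_1$ occurs in $\lambda$ with multiplicity one while every other multiplicity agrees with the corresponding multiplicity of $\langle \lambda \rangle$; hence the row-permuting group $\mathfrak{S}_{m_1} \times \mathfrak{S}_{m_2} \times \cdots$ of $\lambda$ fixes the first row and is identified with that of $\langle \lambda \rangle$. The drop-first-row bijection is equivariant for this action and therefore descends to unordered classes. Here it is crucial that $\xi$ permits cracks: the $\lambda_1$ leftover cells are allowed to split into several tiles when they are packed back into the restored first row.

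The crackless identity $\eta^\lambda = \eta^{\langle \lambda \rangle}$ is the step I expect to be the main obstacle. The drop-first-row map does send a crackless tiling of $\lambda$ -- one brick per row, the top brick having length $\lambda_1$ -- to a crackless tiling of $\langle \lambda \rangle$, and this assignment is injective; the difficulty is surjectivity. Inverting it on crackless tilings forces us to restore the first row using a single brick of length $\lambda_1$, so everything hinges on showing that the complement in $B_w$ of the bricks used by a crackless tiling of $\langle \lambda \rangle$ is itself a single brick of length $\lambda_1$. A priori those complementary $\lambda_1$ cells may be distributed among several bricks, so -- in contrast with the $\xi$ case, where cracks absorb them harmlessly -- this is exactly where the crackless condition together with $\lambda_1 > \lambda_2$ must be brought to bear, and it is the point I would scrutinize most carefully before declaring the bijection complete.
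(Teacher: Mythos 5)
Your row-permutation bijection for the first bullet and your drop-first-row bijection for $\zeta^\lambda=\zeta^{\langle\lambda\rangle}$ and $\xi^\lambda=\xi^{\langle\lambda\rangle}$ are correct and are precisely the paper's own (much more compressed) argument: tilings are insensitive to the order of the parts, and since $|\lambda|=n$ the unused bricks must all land in the first row, which as an unordered row-filling can happen in only one way. The step you singled out as the main obstacle is, however, a genuine gap, and it cannot be closed: the claimed identity $\eta^\lambda=\eta^{\langle\lambda\rangle}$ is false. Inverting the drop-first-row map on \emph{crackless} tilings requires the complement in $B_w$ of the bricks used for rows $2,\dots,r$ to be a \emph{single} brick of length $\lambda_1$, and neither cracklessness of the other rows nor $\lambda_1>\lambda_2$ forces this. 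Concretely, take $n=3$, $\lambda=(2,1)$ (so $\lambda_1>\lambda_2$ and $\langle\lambda\rangle=(1)$) and $w=e\in\mathfrak{S}_3$: then $B_w$ consists of three bricks of length $1$, so $\eta^{(2,1)}(w)=0$ while $\eta^{(1)}(w)=3$. Structurally, the paper's preceding theorem shows that for $\lambda\vdash n$ the function $\eta^\lambda$ is the indicator of the single conjugacy class $\cyc(w)=\lambda$, whereas $\eta^{\langle\lambda\rangle}$ is supported on many classes, so equality is hopeless. What your injective forward map does prove is the pointwise inequality $\eta^\lambda\leq\eta^{\langle\lambda\rangle}$; the correct identity (immediate from $\eta^\mu=\binom{c}{\mu}$, proved later in the paper, together with $m_{\lambda_1}(\lambda)=1$) is $\eta^\lambda=c_{\lambda_1}\,\eta^{\langle\lambda\rangle}$.

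You should also know that the paper's own proof commits exactly the oversight you were worried about: the sentence ``there is only one way of doing that, as the order of tiles in rows of $\lambda$ does not matter'' is valid for $\zeta$ and $\xi$, where the leftover bricks may be packed into the first row as several tiles, but for $\eta$ the leftover bricks must constitute one tile, which is an extra condition on $w$ that generally fails. Fortunately the $\eta$ part of the second bullet is never used later (the two-row and hook computations invoke only $\zeta^\lambda=\zeta^{\langle\lambda\rangle}$ together with $\zeta^{(k)}=\sum_{\mu\vdash k}\eta^\mu$), so nothing downstream depends on it; but as stated the third identity of the second bullet should be deleted or replaced as above. In short: your completed steps are sound and coincide with the paper's route, and the step you declined to complete is not a defect of your write-up but of the lemma itself.
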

\begin{proof}
The first statement is just a consequence of the fact that the brick tilings of any type does not depend in the relative order of parts of $\lambda$. The second says that if the weight of $\lambda$ and the weight of $\cyc(w)$ are equal, then to determine a tiling of $\lambda$ by $w$, we just need to determine a tiling of $\left< \lambda \right>$ by $w$. Because whatever tiles are not being used will have to fit in $\lambda_1$, and there is only one way of doing that, as the order of tiles in rows of $\lambda$ does not matter.
\end{proof}

\begin{example}
Let $\lambda = (2, 2, 1) \vdash 5$  and $u = (3,1)(4)(5,2)$ and $w = (2)(3,1)(4)(5)$, recall
\begin{align*}
\zeta^{\lambda}(u) &= |\widetilde{B}_u(\lambda)| = 2 \\
\zeta^{\lambda}(w) &= |\widetilde{B}_w(\lambda)| = 6 
\end{align*}
In this case, $\left< \lambda \right> = (2,1) \vdash 3$ diagram of $\lambda$ is given by  \ydiagram[]{2,1}, we have
\begin{align*}
\widetilde{B}_u(\left< \lambda \right> ) &= \left\{  \ytableaushort{52,4}*[*(blue!20)]{2}*[*(green!20)]{0,1}, \ytableaushort{31,4}*[*(red!20)]{2}*[*(green!20)]{0,1} \right\} \\
\widetilde{B}_w(\left< \lambda \right>) &= \left\{ \ytableaushort{24,5}*[*(blue!20)]{1}*[*(green!20)]{2}*[*(yellow!50)]{0,1} , \ytableaushort{25,4}*[*(blue!20)]{1}*[*(green!20)]{0,1}*[*(yellow!50)]{2}, \ytableaushort{45,2}*[*(blue!20)]{0,1}*[*(green!20)]{1}*[*(yellow!50)]{2}, 
\ytableaushort{31,5}*[*(red!20)]{2}*[*(yellow!50)]{0,1} , \ytableaushort{31,4}*[*(red!20)]{2} *[*(green!20)]{0,1}, \ytableaushort{31,2}*[*(red!20)]{2} *[*(blue!20)]{0,1}   \right\}
\end{align*}
which is in accordance with table \ref{combchar}.
\end{example}
For $T \subseteq B_w$ be a set of bricks coming from $w \in \mathfrak{S}_n$ and we denote by $\xi^\lambda_T$ the number of tilings of $\lambda \vdash n$ with brick set given by $T$. Then counting \emph{brickwise}, we have
\begin{equation} \label{brickwise}
  \xi^\lambda = \sum_{T \subseteq B_w} \xi^\lambda_T 
\end{equation}
and
\begin{equation} \label{brickwisesh}
\eta^\lambda = \sum_{\substack{T \subseteq B_w \\ \sh(T) = \lambda}}  1
\end{equation}
Furthermore, since for the row shape $(k) \vdash k$, we can count the tilings of all shapes, which gives
\begin{equation} \label{kshape}
  \zeta^{(k)} = \xi^{(k)} = \sum_{\mu \vdash k} \eta^\mu  
\end{equation}
These are some identities that will come handy in later to prove our main result.

    \subsection{Doubilet's inversion formula}


Recall the Young's rule which states that 
\begin{align*}
    M^\lambda &= \bigoplus_{\mu \unlhd \lambda} K_{\mu \lambda} S^\lambda 
\end{align*}
where $K_{\mu\lambda}$ are the Kostka numbers. If we denote by $\chi^\lambda$ the irreducible character of $\mathfrak{S}_n$ corresponding to the Specht module $S^\lambda$, then this implies
\begin{align*}
\zeta^\lambda &= \sum_{\mu \unlhd \lambda} K_{\mu\lambda} \chi^\mu
\end{align*} 
We also know that for partitions $\lambda, \mu \vdash n$, the coefficients $K_{\mu \lambda} \neq 0$ if and only if $\mu \unlhd \lambda$. Furthermore, $K_{\lambda \lambda} = 1$. This implies that the Kostka matrix $K = (K_{\mu \lambda})$ is invertible. Therefore, we can write the irreducible character $\chi^\lambda$ as a linear combination of $\zeta^\lambda$'s. Such an inversion formula was given by Doubilet in \cite{doubilet}, which states
\begin{align*}
\chi^\lambda &= \sum_{\substack{\sigma \in \mathfrak{S}_n  \\ \widetilde{\sigma \lambda} \vdash n }} \sgn(\sigma) \zeta^{\widetilde{\sigma \lambda}}
\end{align*}
where $\sigma \lambda$ is the sequence defined as
\begin{align*}
\sigma \lambda = (\lambda_1 + \sigma(1) - 1, \lambda_2 + \sigma(2)-2, \cdots, \lambda_n + \sigma(n)-n)
\end{align*}
$(\sigma \lambda) _i = \lambda_i + \sigma(i) - i$ for all $i =1, 2, \dots, n$ and $\widetilde{\sigma \lambda}$ is the rearrangement of this sequence in a weakly decreasing order. Since $\lambda_i = 0$ for $\ell(\lambda) < i \leq n$, this implies that for $\widetilde{\sigma \lambda}$ to be a partition, we should have
\begin{align*}
\sigma(i) - i &\geq 0  
\end{align*}
equivalently $\sigma(i) \geq i$ for all $ \ell(\lambda) < i \leq n$. This implies $\sigma(n)=n$ and since $\sigma$ is a bijection and hence injective $\sigma(n-1) = n-1$. Inductively, we have $\sigma(i) = i$ for $\ell(\mu) < i < n$. Let $\overline{\mathfrak{S}}_\ell(\lambda)$ be the subgroup of permutations that pointwise fix all the elements from $\ell(\lambda)+1$ to $n$. Then, the above observation implies that the sum on the right of the inversion formula can be taken over $\sigma \in \overline{\mathfrak{S}}_{\ell(\lambda)}$ such that $\widetilde{\sigma \lambda} \vdash n$.  Hence we can restate the inversion formula as:
\begin{lemma}\label{doubiletinverse}
(Doubilet's inversion formula) Let $\lambda \vdash n$, then keeping the notation of this section, we have
\begin{align*}
\chi^\lambda &= \sum_{\substack{\sigma \in \overline{\mathfrak{S}}_{\ell(\lambda)}  \\ \widetilde{\sigma \lambda} \vdash n }} \sgn(\sigma) \zeta^{\widetilde{\sigma \lambda}}
\end{align*}
\end{lemma}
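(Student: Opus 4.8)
The statement to be proved is the index-restricted form of the inversion, and I would reach it by first establishing the unrestricted Doubilet formula and then carrying out the reduction of the summation range already foreshadowed above. The plan is to transport everything to the ring of symmetric functions via the Frobenius characteristic map $\mathrm{ch}$, which sends $\zeta^\lambda \mapsto h_\lambda$ (the complete homogeneous symmetric function, since $\zeta^\lambda$ is the character of the permutation module $M^\lambda$) and $\chi^\lambda \mapsto s_\lambda$ (the Schur function). Because $\mathrm{ch}$ is a linear isomorphism, any linear identity expressing $s_\lambda$ in terms of the $h_\mu$ pulls back verbatim to one expressing $\chi^\lambda$ in terms of the $\zeta^\mu$. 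Thus it suffices to invert the Kostka relation $\zeta^\lambda = \sum_\mu K_{\mu\lambda}\chi^\mu$ on the symmetric-function side, i.e.\ to express $s_\lambda$ through the $h_\mu$.

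For this inversion I would invoke the Jacobi--Trudi identity in the form $s_\lambda = \det\!\big(h_{\lambda_i - i + j}\big)_{1 \le i,j \le n}$, where $\lambda$ is padded by zero parts up to length $n$ and one adopts the conventions $h_0 = 1$ and $h_m = 0$ for $m < 0$. Expanding the determinant over $\mathfrak{S}_n$ gives
\[
s_\lambda = \sum_{\sigma \in \mathfrak{S}_n} \sgn(\sigma) \prod_{i=1}^n h_{\lambda_i + \sigma(i) - i},
\]
so each term is governed by the sequence $\sigma\lambda$ with entries $(\sigma\lambda)_i = \lambda_i + \sigma(i) - i$. The decisive bookkeeping step is that $\sum_i (\sigma\lambda)_i = |\lambda| = n$ for every $\sigma$, so a term survives precisely when all entries are nonnegative---equivalently when $\widetilde{\sigma\lambda} \vdash n$---and in that case the nonvanishing factors depend only on the multiset of positive parts, giving $\prod_i h_{(\sigma\lambda)_i} = h_{\widetilde{\sigma\lambda}}$. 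Applying $\mathrm{ch}^{-1}$ then yields the unrestricted formula $\chi^\lambda = \sum_{\sigma:\,\widetilde{\sigma\lambda}\vdash n} \sgn(\sigma)\,\zeta^{\widetilde{\sigma\lambda}}$.

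Finally I would perform the reduction to $\overline{\mathfrak{S}}_{\ell(\lambda)}$ exactly as sketched before the statement: for $i > \ell(\lambda)$ one has $\lambda_i = 0$, so nonnegativity of $(\sigma\lambda)_i$ forces $\sigma(i) \ge i$; reading this from $i = n$ downward and using injectivity of $\sigma$ forces $\sigma(i) = i$ for every $i > \ell(\lambda)$, whence only $\sigma \in \overline{\mathfrak{S}}_{\ell(\lambda)}$ contribute. The main obstacle is not a hard estimate but the care demanded in the middle step: one must handle the $h_{<0}=0$ and $h_0=1$ conventions correctly, justify that padding $\lambda$ with zero parts leaves the Jacobi--Trudi determinant equal to $s_\lambda$, and note that the signed sum is deliberately left as a sum over permutations rather than collapsed into the (signed, cancelling) inverse Kostka coefficients---so that no identification of distinct $\sigma$ producing the same $\widetilde{\sigma\lambda}$ is ever required. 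Alternatively, one may simply cite Doubilet \cite{doubilet} for the unrestricted identity and present only this reduction as the content of the lemma.
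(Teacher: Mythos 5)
Your argument is correct, and its second half---forcing $\sigma(i)=i$ for $i>\ell(\lambda)$ by reading the nonnegativity constraint $\sigma(i)\geq i$ downward from $i=n$ and using injectivity---is exactly the paper's argument; that reduction is in fact the entire content of the paper's proof, since the paper simply cites Doubilet for the unrestricted identity $\chi^\lambda=\sum_{\sigma:\,\widetilde{\sigma\lambda}\vdash n}\sgn(\sigma)\,\zeta^{\widetilde{\sigma\lambda}}$ rather than proving it. Where you genuinely diverge is in supplying a self-contained derivation of that unrestricted formula: transporting the problem through the Frobenius characteristic ($\chi^\lambda\mapsto s_\lambda$, $\zeta^\mu\mapsto h_\mu$), expanding the Jacobi--Trudi determinant $\det(h_{\lambda_i-i+j})$ over $\mathfrak{S}_n$, and observing that the row sums of $\sigma\lambda$ are always $n$ so that a term survives precisely when all entries are nonnegative, in which case the $h_0=1$ convention collapses the product to $h_{\widetilde{\sigma\lambda}}$. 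This buys independence from the citation at the cost of importing Jacobi--Trudi and the characteristic map; your remark that the signed sum is deliberately kept over permutations (so no cancellation or identification of distinct $\sigma$ with equal $\widetilde{\sigma\lambda}$ is needed) is a point the paper leaves implicit and is worth keeping. Your closing alternative---cite Doubilet and present only the index reduction---is precisely what the paper does.
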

For our purposes later, we will be able to reduce this sum to a smaller indexing set using the condition that $\sigma \lambda$ needs to be a composition of $n$. This enforces that none of the parts of $\sigma \lambda$ should be less than $1$. For the time being, we show a simple example.

\begin{example} \label{MSM}
For a positive integer $n \geq 1$, take the partition $\lambda = (n-1,1) \vdash n$, then
\begin{align*}
    \chi^{(n-1,n)} &= \zeta^{(n-1, 1)}  - \zeta^{(n)}
\end{align*}
which reminiscent of the fact that the $M^{(n-1,1)} = S^{(n-1, 1)} \oplus M^{(n)}$, where we know that $S^{n-1,n}$ is the regular representation and $M^{(n)}$ is the trivial representation of $\mathfrak{S}_n$.
\end{example}
    \subsection{Face numbers of permutohedron}    

The standard permutohedron is an example of a convex polytope associated to permutations. To each permutation $w \in \mathfrak{S}_n$, we associate a point in $\mathbb{R}^n$
\begin{align*}
    p_w &= (w(1), w(2), \cdots, w(n)) \in \mathbb{R}^n
\end{align*}
The \emph{standard permutohedron} $\Pi_n$ is defined to be the convex hull of these points, i.e.
\begin{align*}
    \Pi_n &= \conv \{ p_w: w \in \mathfrak{S}_n\}
\end{align*}
Note that for each $p_w$, the sum of all the coordinates equals $1 + 2 + \cdots  + n$, i.e.
\begin{align*}
    \sum_{i=1}^n p_{w, i} = 1 + 2 + \cdots + n  = \binom{n+1}{2}
\end{align*}
This means that $\Pi_n$ lies in a hyperplane in $\mathbb{R}^n$ and consequently $\dim(\Pi_n) \leq n-1$. Furthermore, for each $p_w$, sum of any $k$ coordinates is atleast $1 + 2 + \dots + k$, i.e. for any $I \subset [n]$
\begin{align*}
    \sum_{i \in I} p_{w, i} \geq 1 + 2 + \cdots + |I| = \binom{|I|+1}{2}
\end{align*}
It is known classically that these inequalities are enough to describe $\Pi_n$. We need the following known result (see for example \cite{ziegler}) from polytope theory. 
\begin{theorem}
The standard permutohedron $\Pi_n$ is a simple $(n-1)$-dimensional convex polytope given by
\begin{align*}
    \Pi_n &= \left\{ \mathbf{x} \in \mathbb{R}^n: \sum_{i=1}^n x_i = n~~~\text{and}~~~\sum_{i \in I} x_i \geq \binom{|I|+1}{2}, ~~\text{for all}~~ I \subset [n] \right\}
\end{align*} 
with face numbers given by
\begin{align*}
    f_k(\Pi_n) &= (n-k)! \begin{Bmatrix} n\\ n-k \end{Bmatrix}
\end{align*}
 for $k =0, \cdots, n-1$, where $\begin{Bmatrix} n\\ k \end{Bmatrix}$ is the Stirling number of the second kind (that counts the partitions of $[n]$ into $k$ blocks).
\end{theorem}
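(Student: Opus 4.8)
The plan is to establish the two assertions — the inequality (H-)description of $\Pi_n$ together with its simplicity, and the face-number formula — by first identifying the vertices of the polyhedron cut out by the stated constraints, and then organizing \emph{all} faces according to their sets of tight inequalities. Write $g(I) = \binom{|I|+1}{2}$ and let $P$ be the set defined by $\sum_{i=1}^n x_i = g([n])$ together with $\sum_{i\in I} x_i \ge g(I)$ for all $\emptyset \ne I \subsetneq [n]$. Taking $I=\{i\}$ gives $x_i\ge 1$, and combining the complementary co-singleton inequality with the hyperplane equality gives $x_i\le n$, so $P$ is bounded and is therefore a polytope. The inclusion $\Pi_n \subseteq P$ is immediate: the excerpt already records that each $p_w$ satisfies all the defining (in)equalities, and $P$ is convex, so $P \supseteq \conv\{p_w\} = \Pi_n$. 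All the content is in the reverse inclusion $P \subseteq \Pi_n$, which I would obtain by showing that every vertex of $P$ is one of the points $p_w$.

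The key structural input is that $g$ is \emph{supermodular}: since $g(I)=\binom{|I|+1}{2}$ depends only on $|I|$ and $t\mapsto\binom{t+1}{2}$ is convex, one checks $g(I)+g(J)\le g(I\cup J)+g(I\cap J)$ for all $I,J$. I would use this through the standard lemma that, for any $x\in P$, the family $\mathcal T(x)$ of \emph{tight} sets (those $I$ with $\sum_{i\in I}x_i=g(I)$) is closed under union and intersection. Indeed, if $I,J\in\mathcal T(x)$ then
\[ g(I)+g(J) = \sum_{i\in I}x_i+\sum_{i\in J}x_i = \sum_{i\in I\cup J}x_i+\sum_{i\in I\cap J}x_i \ge g(I\cup J)+g(I\cap J) \ge g(I)+g(J), \]
where the first inequality is feasibility and the second is supermodularity; hence equality holds throughout and $I\cup J,\,I\cap J\in\mathcal T(x)$.

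At a vertex $v$ of $P$ the lattice $\mathcal T(v)$ must pin $v$ down within the hyperplane, and I claim it contains a complete chain $\emptyset=I_0\subsetneq I_1\subsetneq\cdots\subsetneq I_n=[n]$ with $|I_j|=j$. Choosing a maximal chain in $\mathcal T(v)$ with blocks $D_t=C_t\setminus C_{t-1}$, the lattice property forbids any tight set from \emph{crossing} a block (a crossing set $S$ would yield $(S\cap C_t)\cup C_{t-1}\in\mathcal T(v)$ strictly between $C_{t-1}$ and $C_t$, contradicting maximality); hence a within-block perturbation $v_a+\epsilon,\,v_b-\epsilon$ with $a,b\in D_t$ keeps every tight constraint tight and stays feasible for small $\epsilon$, so any block of size $\ge 2$ would place $v$ in the relative interior of a segment. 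Thus all $|D_t|=1$, the chain is maximal, and tightness forces $v_{a_j}=g(I_j)-g(I_{j-1})=j$ for $\{a_j\}=I_j\setminus I_{j-1}$; that is, $v=p_w$ with $w(a_j)=j$. This gives $P=\Pi_n$ and that the vertices are exactly the $p_w$. For simplicity I would note conversely that the facet-inequalities tight at $p_w$ are precisely those indexed by the initial segments $I=w^{-1}(\{1,\dots,k\})$, $1\le k\le n-1$, which number exactly $n-1=\dim\Pi_n$.

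For the face numbers I would set up the bijection between nonempty faces of $\Pi_n$ and chains of tight sets. By the lemma the sets tight on a face form a sublattice, and every face is exposed by a chain $\emptyset\subsetneq J_1\subsetneq\cdots\subsetneq J_{m-1}\subsetneq[n]$; recording successive differences $(J_1,\,J_2\setminus J_1,\,\dots,\,[n]\setminus J_{m-1})$ identifies these chains with \emph{ordered set partitions} of $[n]$ into $m$ blocks. Imposing the $m-1$ independent equalities $\sum_{i\in J_t}x_i=g(J_t)$ drops the dimension from $n-1$ to $n-m$, so a face of dimension $k$ corresponds to an ordered set partition into $m=n-k$ blocks. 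Since the number of ordered set partitions of $[n]$ into $n-k$ blocks is $(n-k)!\,\begin{Bmatrix} n\\ n-k\end{Bmatrix}$, the formula follows. The main obstacle is the reverse inclusion $P\subseteq\Pi_n$, concentrated in the supermodularity lemma and the no-crossing vertex argument above; a secondary technical point is to check that each chain of tight sets genuinely exposes a face of the claimed dimension on which exactly that chain is tight, so that the correspondence with ordered set partitions is a bijection rather than merely a surjection. Everything else — the easy inclusion, simplicity, and the final count — is bookkeeping once faces are matched with chains of tight sets.
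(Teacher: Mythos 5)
The paper offers no proof of this theorem: it is invoked as a known result with a citation to Ziegler's book, so there is no internal argument to measure yours against. What you supply is a complete and correct self-contained proof, and it is essentially the standard one for base polytopes of supermodular functions: the closure of the tight-set family under union and intersection (via supermodularity of $I \mapsto \binom{|I|+1}{2}$, which here reduces to convexity of $t \mapsto \binom{t+1}{2}$), the no-crossing argument showing every vertex arises from a complete chain and hence equals some $p_w$, and the chain/ordered-set-partition dictionary giving both simplicity and the count $f_k(\Pi_n) = (n-k)!\begin{Bmatrix} n\\ n-k\end{Bmatrix}$. Two small remarks. First, you have silently (and correctly) repaired a typo in the statement as printed: the defining hyperplane should be $\sum_i x_i = \binom{n+1}{2}$, not $\sum_i x_i = n$; your $g([n])$ is the right normalization, and it is worth flagging that the statement itself needs this correction. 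Second, the one place where your write-up is a sketch rather than a proof is the final bijectivity claim: you assert, and correctly identify as the remaining technical point, that each chain $\emptyset \subsetneq J_1 \subsetneq \cdots \subsetneq J_{m-1} \subsetneq [n]$ exposes a distinct face of dimension exactly $n-m$ on which no further inequality is tight. This follows because the face cut out by such a chain is a product of lower-dimensional permutohedra on the blocks $J_t \setminus J_{t-1}$ (suitably translated), whose relative interior contains points with no additional tight sets; adding a sentence to that effect would close the argument. With that addition the proof is complete, and it has the merit of making the paper self-contained where it currently defers to the literature.
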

Since $\Pi_n$ is simple convex polytope, its dual $\Pi_n^*$ is a simplicial convex polytope. We can consider the simplicial complex $\Delta_{n-1} := \partial \Pi_n^*$ (the boundary complex of $\Pi_n^*$). Its face numbers are given by
\begin{align*}
    f_{k-1}(\Delta_{n-1}) &= (k+1)! \begin{Bmatrix} n\\ k+1 \end{Bmatrix}
\end{align*}
for $k=0,1, \cdots, n-1$. This implies that the reduced Euler characteristic of $\Delta_{n-1}$ is given by
\begin{align*}
    \sum_{k=0}^{n-1} (-1)^{k-1} f_{k-1}(\Delta_{n-1}) &= \sum_{k=0}^{n-1} (-1)^{k-1} (k+1)! \begin{Bmatrix} n\\ k+1 \end{Bmatrix}\\
    &= \sum_{k=1}^{n} (-1)^{k-1} k! \begin{Bmatrix} n\\ k \end{Bmatrix}
\end{align*}
Since $\Delta_{n-1}$ is homeomorphic to the $(n-2)$-dimensional sphere $S^{n-2}$, its reduced Euler characteristic equals to $(-1)^{n-2}$. This can be rewritten as
\begin{equation} \label{altperm}
    \sum_{k=1}^{n} (-1)^{k} k! \begin{Bmatrix} n\\ k \end{Bmatrix} = (-1)^n
\end{equation}
We will use this identity in later section.

\section{Combinatorics of tiling class functions}
    Characters of the symmetric group $\mathfrak{S}_n$ are examples of class functions. What that means is that they are constant over conjugacy classes in $\mathfrak{S}_n$. This implies that they only depend on cycle structure of evaluating permutation. Simple examples of class functions are $c_i$'s which count the number of cycles of length $i$ in a permutation. It is a result of Frobenius \cite{frobenius} that the every irreducible character of $\mathfrak{S}_n$ is a polynomial function of $c_i$'s, called \emph{character polynomials}. They were also studied later by Murnaghan \cite{murnaghan} and Specht \cite{specht}. Macdonald mentions them in \cite{macdonald} and attribute them to Frobenius. Garsia and Goupil \cite{garsiagoupil} gave an umbral construction of these polynomials. Kerber also studied them in his book \cite{kerberbook} on group actions. Recently, they have reoccurred in the context of representation stability \cite{CEF}. We study our tiling class functions as polynomials in $c_i$'s. The main result in this section is identity \ref{hook3}, which equates two alternating sums of class functions. We provide two proofs of it: one using homology on poset of brick tilings, and the other using reduced Euler characteristic of boundary complex of dual polytope to permutohedron.
    \subsection{Character polynomials}
        
The class functions $\{ c_i \}_{i \in \mathbb{N}}$ are defined as
\begin{align*}
c_i : \mathfrak{S}_n &\longrightarrow \mathbb{N} \\
w &\longmapsto ~\text{number of cycles of $w$ of length $i$}
\end{align*}
Let $\mathbb{Q}[c_1, c_2, \cdots]$ be the ring of polynomials in $c_i$'s with rational coefficient. We call a polynomial $q(c_1, c_2, \cdots) \in \mathbb{Q}[c_1, c_2, \cdots]$ a \emph{class polynomial}. Frobenius \cite{frobenius} showed the following
\begin{theorem}
Let $\lambda = (\lambda_1, \cdots, \lambda_\ell) \vdash n$ and let $w \in \mathfrak{S}_n$ be a permutation of cycle type $ \cyc(w) = \mu = (\mu_1, \cdots, \mu_r) \vdash n$, then $\chi^\lambda(w)$ equals the coefficient of $x_1^{\lambda_1 + \ell -1} x_2^{\lambda + \ell -2} \cdots x_\ell^{\lambda_\ell}$ in the expansion of
\begin{align*}
   \prod_{1 \leq i < j \leq \ell} (x_i - x_j) \prod_{i=1}^r (x_1^{\mu_i} + x_2^{\mu_i} + \cdots + x_\ell^{\mu_i})
\end{align*}
\end{theorem}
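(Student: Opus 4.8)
The plan is to derive the formula directly from Doubilet's inversion formula (Lemma \ref{doubiletinverse}) together with the generating-function description of the permutation character $\zeta^\lambda$ established earlier. Write $\ell = \ell(\lambda)$, let $\delta = (\ell-1, \ell-2, \dots, 1, 0)$ be the staircase, and set
\begin{align*}
P(x_1, \dots, x_\ell) &:= \prod_{i=1}^{r}\big(x_1^{\mu_i} + x_2^{\mu_i} + \cdots + x_\ell^{\mu_i}\big).
\end{align*}
The target monomial $x_1^{\lambda_1 + \ell - 1} \cdots x_\ell^{\lambda_\ell}$ is exactly $x^{\lambda + \delta}$, so it suffices to show that the coefficient of $x^{\lambda+\delta}$ in the Vandermonde product $\prod_{i<j}(x_i - x_j)\,P(x)$ equals $\chi^\lambda(w)$. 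The whole argument is then a coefficient extraction that reproduces Doubilet's alternating sum term by term.

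First I would expand the Vandermonde factor as an alternating sum of monomials. Since $\prod_{i<j}(x_i-x_j) = \det(x_i^{\ell-j})$ has its lexicographically leading term equal to $x^\delta$ with coefficient $+1$, we have
\begin{align*}
\prod_{1 \le i < j \le \ell}(x_i - x_j) &= \sum_{\sigma \in \mathfrak{S}_\ell} \sgn(\sigma)\, \prod_{i=1}^{\ell} x_i^{\ell - \sigma(i)}.
\end{align*}
Multiplying by $P$ and reading off the coefficient of $x^{\lambda+\delta} = \prod_i x_i^{\lambda_i + \ell - i}$, the contribution of the $\sigma$-term requires the coefficient of $\prod_i x_i^{(\lambda_i + \ell - i)-(\ell-\sigma(i))} = \prod_i x_i^{\lambda_i + \sigma(i) - i}$ in $P$. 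But the exponent vector here is precisely $\sigma\lambda$ in Doubilet's notation, so the coefficient of $x^{\lambda+\delta}$ in $\prod_{i<j}(x_i-x_j)\,P$ equals $\sum_{\sigma \in \mathfrak{S}_\ell} \sgn(\sigma)\,[x^{\sigma\lambda}]P$.

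Next I would identify each coefficient $[x^{\sigma\lambda}]P$. If some entry of $\sigma\lambda$ is negative the coefficient vanishes, since $P$ has only nonnegative exponents; this matches Doubilet's restriction that $\widetilde{\sigma\lambda}$ be a genuine partition (the entries of $\sigma\lambda$ always sum to $n$, so $\widetilde{\sigma\lambda} \vdash n$ holds exactly when all entries are nonnegative). When $\sigma\lambda$ is nonnegative, expanding $P$ shows that $[x^{\sigma\lambda}]P$ counts the ways to assign the bricks of $B_w$ to the $\ell$ rows so that row $j$ has total length $(\sigma\lambda)_j$ --- that is, the ordered brick tilings of the composition $\sigma\lambda$ by $w$. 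By the lemma that tiling counts depend only on the underlying partition (empty rows being immaterial), this equals $\zeta^{\widetilde{\sigma\lambda}}(w)$. Substituting, the coefficient of $x^{\lambda+\delta}$ becomes $\sum_{\sigma:\ \widetilde{\sigma\lambda}\vdash n} \sgn(\sigma)\,\zeta^{\widetilde{\sigma\lambda}}(w)$, which is exactly the right-hand side of Doubilet's inversion formula and hence equals $\chi^\lambda(w)$.

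The main obstacle is the sign and indexing bookkeeping rather than any deep idea: one must check that the orientation of the Vandermonde (so that $\sigma = \mathrm{id}$ carries sign $+1$ and lands on $x^\delta$) makes $\sgn(\sigma)$ agree with Doubilet's sign, and that the vanishing of $[x^{\sigma\lambda}]P$ for sequences with a negative entry lines up exactly with the condition $\widetilde{\sigma\lambda}\vdash n$. A secondary point to handle carefully is that Doubilet indexes over $\overline{\mathfrak{S}}_{\ell(\lambda)} \subset \mathfrak{S}_n$, but the entries of $\sigma\lambda$ beyond position $\ell$ are forced to be $0$, so that sum genuinely collapses to one over $\mathfrak{S}_\ell$ permuting the first $\ell$ coordinates, matching the index set of the Vandermonde expansion.
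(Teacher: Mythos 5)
The paper does not actually prove this statement---it is quoted as a classical result of Frobenius with a citation and no argument---so your derivation is necessarily a different route, and it is a correct one. Your coefficient extraction is sound: the Vandermonde expansion $\sum_{\sigma\in\mathfrak{S}_\ell}\sgn(\sigma)\prod_i x_i^{\ell-\sigma(i)}$ has distinct monomials, so the coefficient of $x^{\lambda+\delta}$ in the product is exactly $\sum_\sigma \sgn(\sigma)\,[x^{\sigma\lambda}]P$, the identification $[x^{\sigma\lambda}]P=\zeta^{\widetilde{\sigma\lambda}}(w)$ (zero when an entry is negative, with zero entries harmless) is exactly the ordered-brick-tiling count the paper establishes for the permutation character, and the signs and the collapse of $\overline{\mathfrak{S}}_{\ell(\lambda)}$ to $\mathfrak{S}_\ell$ line up as you say. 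The one thing worth flagging is the direction of the logic: classically the Frobenius formula is proved first and determinantal identities such as Doubilet's (equivalently Jacobi--Trudi) are derived from it, so your argument would be circular if the paper obtained Doubilet's formula from Frobenius. It does not---Doubilet's inversion is imported from the literature via inverting the Kostka matrix---so within this paper's logical structure your proof is legitimate, and it buys something the paper lacks: a self-contained verification of the Frobenius formula using only ingredients (Young's rule, the Kostka inversion, and the tiling interpretation of $\zeta^\lambda$) that are already on the table.
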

The character polynomial $q_\lambda(c_1, c_2, \cdots)$ is defined as the \emph{unique} polynomial in $\mathbb{Q}[c_1, c_2, \cdots ]$ such that for all partitions $\lambda \vdash k$ and $n \geq \lambda_1 + k$, we have
\begin{equation}
    \chi^{\lambda[n]}(w) = q_\lambda(c_1(w), c_2(w), \cdots)
\end{equation}
for all $w \in \mathfrak{S}_n$. Note that the character polynomial $q_\lambda$ as a polynomial does not depend on $n$, and hence gives a uniform description of irreducible characters corresponding to augmented partition $\lambda[n]$ for all symmetric groups at the same time (with $n \geq \lambda_1 + k$). This is quite remarkable.

\begin{example}
The character polynomial for empty partition is the constant function $1$, as it corresponds to the trivial representation. The character polynomial for$\lambda = 1$ is given by
\begin{align*}
    q_{(1)} &= c_1 - 1
\end{align*}
as $\lambda[n] = (n-1,1)$ corresponds to the regular representation.
\end{example}

For $\lambda \vdash k$, there is a special family of class polynomials called \emph{binomial class polynomials} $\binom{c}{\lambda}$ defined as 
\begin{align*}
\binom{c}{\lambda } &:=  \prod_{i=1}^{n} \binom{c_i}{m_i(\lambda)} 
\end{align*}
where $m_i(\lambda)$ is the multiplicity of $i$ in $\lambda$. As an example, for $\lambda = (4, 2, 2, 1, 1)$, we have
\begin{align*}
\binom{c}{\lambda } &:=   \binom{c_4}{1} \binom{c_2}{2} \binom{c_1}{2} 
\end{align*}

Notice that for $\lambda \vdash k$ and $w \in \mathfrak{S}_n$ with $n \geq k$, having a unordered crackless tiling of $\lambda$ by $\pi$ is equivalent to choosing $m_i(\lambda)$ bricks from all $c_i(w)$ bricks of $w$ of length $i$, for each $i=1, \cdots, n$. This implies
\begin{equation}
\eta^\lambda = \binom{c}{\lambda}
\end{equation}
as class functions on $\mathfrak{S}_n$. This implies the generating function equality
\begin{equation} \label{gencrackless}
\sum_{k=0}^\infty \left( \sum_{\mu \vdash k} \eta^\mu \right) t^k = \prod_{i=1}^\infty(1+t^i)^{c_i}
\end{equation}
in the ring of formal power series $\mathbb{Q}[c_1,c_2 \cdots ][[t]]$.

    \subsection{Characters corresponding to two row partitions}
        We consider the case of $\lambda = (k)$ to generalize example \ref{MSM}. In this case, we are looking at the augmented partition $\lambda[n] = (n-k, k) \vdash n$ for $n \geq 2k$. The
 \ref{doubiletinverse} implies as class functions on $\mathfrak{S}_n$
\begin{align*}
\chi^{\lambda[n]} &= \sum_{\substack{\sigma \in \mathfrak{S}_2 }} \sgn(\sigma) \zeta^{\widetilde{\sigma \lambda[n]}} \\
&=  \zeta^{(n-k,k)} - \zeta^{(n-k+1,k-1)} \\
&=  \zeta^{(k)} - \zeta^{(k-1)} \\
&= \sum_{\mu \vdash k} \eta^ \mu - \sum_{\mu \vdash k-1} \eta^ \mu
\end{align*}
Therefore from equation \ref{gencrackless} we have the following identity in $\mathbb{Q}[c_1, c_2, \cdots][[t]]$
\begin{equation} \label{tworow}
    \sum_{k=0}^\infty q_{(k)}t^{k} = (1-t) \prod_{i=1}^\infty (1-t^i)^{c_i}
\end{equation}
We should be careful about stating the above equality. Becuase what we really mean is that the coefficients of $t^n$ are equal as polynomial functions of $c_i$'s.
\begin{example}
We find first few character polynomials from the above equality. A table of them can also be found\cite{kerberbook}
\begin{align*}
    q_{()} &= 1 \\
    q_{(1)} &= \binom{c_1}{1} - 1 \\
    q_{(2)} &=  \binom{c_2}{1} + \binom{c_1}{2} - \binom{c_1}{1} \\
    q_{(3)} &= \binom{c_3}{1} + \binom{c_1}{1}\binom{c_2}{1} + \binom{c_1}{3} - \binom{c_2}{1} - \binom{c_1}{2}
\end{align*}

\end{example}

    \subsection{Characters corresponding to hook partitions}

Let us consider $\lambda = (1^k) \vdash k$. The augmented partition in this case is the hook partition $\lambda[n] = (n-k, 1^k) \vdash n$, for $n \geq 2k$. Recall that the Doubilet inversion formula \ref{doubiletinverse} says
\begin{equation} \label{hook1}
\chi^{\lambda[n]} = \sum_{\sigma \in \overline{\mathfrak{S}}_{k+1}} \sgn(\sigma) \zeta^{\widetilde{\sigma \lambda[n]}}
\end{equation}
where $\overline{\mathfrak{S}}_{k+1}$ denote the subgroup consisting of permutations in $\mathfrak{S}_{n}$ that fixes all the elements from $k+2$ to $n$. The sum can further be  restricted to those $\sigma \in \overline{\mathfrak{S}}_k$ for which $\widetilde{\sigma \lambda[n]}$ is a partition of $n$, i.e.
\begin{align*}
(\widetilde{\sigma \lambda [n]})_i &= \lambda_i + \sigma(i) - i \geq 0
\end{align*}
For $i = 2, \dots k+1$, $\lambda_i = 1$, therefore we are looking for $\sigma \in \overline{\mathfrak{S}}_{k+1}$ such that $\sigma(i) \geq i-1$. Note that there is no condition on $\sigma(1)$ as $n \geq 2k$. So we can say that
\begin{align*}
\sigma(i) \geq i-1
\end{align*}
for all $i = 1, \dots, k+1$. 
This implies such a permutation is uniquely determined by the set
\begin{align*}
      S(\sigma) = \{ (i, \sigma(i)): \sigma(i) \geq i\}
\end{align*}
Fixing $j = k+1- \sigma(1)$, we see that there are $2^{j-1}$ such permutations and each such permutation gives a composition $\mu \vDash j$. On the other hand, given a composition $\mu \vDash j$, we can construct $\sigma$ by taking
\begin{multline}
      S(\sigma) = \{ (1, k+1-j), (k+1-j+1, k+1-j+\mu_1), \\ (k+1-j+\mu_1+1, k+1-j+\mu_1+\mu_2), \cdots \}
\end{multline}
And for such a $\sigma$, we have
\begin{align*}
    \sigma \lambda[n] &= (n-\sigma(1),  
    \mu_1, \mu_2, \cdots, \mu_{\ell}) \\
    \left<\sigma \lambda[n] \right>&= ( \mu_1, \cdots, \mu_\ell) \vDash j 
\end{align*}
Combining these we can prove the following theorem:

\begin{theorem} \label{hook2}
For $\lambda = (1^k) \vdash k$ and $n \geq 2k$, we have the following equality of class functions
\begin{align*}
    \chi^{\lambda[n]} &= \sum_{j=0}^k (-1)^{j+1} \left(  \sum_{\mu \vDash j} (-1)^{\ell(\mu)} \zeta^\mu \right)
\end{align*}
\end{theorem}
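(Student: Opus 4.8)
The plan is to begin from the Doubilet expansion \ref{hook1} and to reindex its terms exactly as prepared in the paragraph preceding the statement. Restricting to those $\sigma \in \overline{\mathfrak{S}}_{k+1}$ for which $\widetilde{\sigma \lambda[n]}$ is a partition forces $\sigma(i) \geq i-1$, and since every position not recorded in $S(\sigma) = \{(i,\sigma(i)) : \sigma(i) \geq i\}$ must satisfy $\sigma(i) = i-1$, the permutation $\sigma$ is completely determined by $S(\sigma)$. Sorting the surviving permutations by $j := k+1-\sigma(1)$, which ranges over $0, 1, \dots, k$, the gaps between the recorded positions read off a composition $\mu \vDash j$, and conversely each $\mu \vDash j$ reconstructs a unique such $\sigma$ (with $j=0$ handled by the empty composition). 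This converts the single sum in \ref{hook1} into a double sum over $j$ and over $\mu \vDash j$.

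Next I would dispose of the shape data. The computation $\left< \sigma \lambda[n] \right> = (\mu_1, \dots, \mu_\ell)$ recorded above shows $\widetilde{\sigma \lambda[n]} = (n-j, \widetilde{\mu})$, which is a genuine partition of $n$ because $n \geq 2k$ makes the first part dominate the parts of $\mu$. Applying the reduction lemma (second part) peels off this first part, and applying the first part of the same lemma reorders what remains, giving $\zeta^{\widetilde{\sigma\lambda[n]}} = \zeta^{\left< \widetilde{\sigma\lambda[n]} \right>} = \zeta^{\widetilde{\mu}} = \zeta^{\mu}$. Hence the term attached to the permutation indexed by $\mu \vDash j$ is $\sgn(\sigma)\, \zeta^\mu$, and only the sign remains to be understood.

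The sign is the crux, and the step I expect to be the main obstacle. I would read it off from the cycle structure of $\sigma$ on $\{1, \dots, k+1\}$. Because the non-excedances of $\sigma$ are precisely the forced drops $\sigma(i) = i-1$, tracing an orbit alternates the upward jumps recorded by $S(\sigma)$ with descending runs $i \mapsto i-1$; carrying this through, position $1$ generates one cycle of length $k+1-j$ and each part $\mu_t$ generates one cycle of length $\mu_t$, so $\sigma$ has exactly $\ell(\mu)+1$ cycles on $\{1,\dots,k+1\}$. Therefore $\sgn(\sigma) = (-1)^{(k+1)-(\ell(\mu)+1)} = (-1)^{k-\ell(\mu)}$, and assembling everything gives
\[
\chi^{\lambda[n]} = \sum_{j=0}^{k} \sum_{\mu \vDash j} (-1)^{k-\ell(\mu)}\, \zeta^\mu .
\]

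The remaining and genuinely delicate point is to match this with the displayed form of the statement. My accounting yields the constant prefactor $(-1)^{k}$ alongside $(-1)^{\ell(\mu)}$, whereas the statement carries the $j$-dependent factor $(-1)^{j+1}$; comparing powers, the two prefactors agree only when $j$ and $k$ have opposite parity. Pinning down the global sign exactly, rather than merely up to sign, is thus the obstacle I anticipate. I would therefore treat the sign as the load-bearing computation: confirm the cycle count $\ell(\mu)+1$ directly on the small cases $k=2,3$ (where the result can be cross-checked against the character polynomial of $\wedge^2$ of the standard representation), and only then commit to the precise power of $-1$ in the final identity.
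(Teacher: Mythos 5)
Your route is exactly the paper's: Doubilet's formula \ref{hook1}, the bijection between the surviving $\sigma \in \overline{\mathfrak{S}}_{k+1}$ and pairs $(j,\mu)$ with $\mu \vDash j$, the reduction $\zeta^{\widetilde{\sigma\lambda[n]}} = \zeta^{\langle\widetilde{\sigma\lambda[n]}\rangle} = \zeta^{\mu}$, and a sign read off from the cycle structure of $\sigma$. The one point where you part ways with the paper is the sign, and there you are right and the printed theorem is not. Your cycle count is correct: the orbit of $1$ is $(1, k+1-j, k-j, \dots, 2)$ of length $k+1-j$, each part $\mu_t$ contributes one cycle of length $\mu_t$, so $\sigma$ has $\ell(\mu)+1$ cycles on $k+1$ letters and $\sgn(\sigma) = (-1)^{k-\ell(\mu)}$, giving the uniform prefactor $(-1)^{k}$ rather than $(-1)^{j+1}$. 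The discrepancy you flag at the end is not a gap in your argument but an error in the statement: the two prefactors would have to agree for every $j \le k$, which is impossible. Your proposed sanity check settles it. For $k=1$ the displayed formula yields $-\zeta^{()}-\zeta^{(1)} = -1-c_1$, whereas $\chi^{(n-1,1)} = c_1-1 = (-1)^{1}\left(\zeta^{()}-\zeta^{(1)}\right)$, matching your version; and the later derivation of Goupil's identity \ref{alt} silently uses $q_{(1^k)} = (-1)^k\sum_{j=0}^{k}\sum_{\mu\vdash j}(-1)^{\ell(\mu)}\eta^{\mu}$, which is again your sign.

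For completeness: the paper's own proof goes wrong precisely at the step you identified as load-bearing. It asserts that the cycle type of $\sigma$ is the rearrangement of $(k+1-j, \mu_1+1, \dots, \mu_{\ell}+1)$, a sequence of total size $k+1+\ell(\mu)$ rather than $k+1$, and then computes $\sgn(\sigma)$ as $(-1)$ raised to the \emph{sum} of the cycle lengths instead of $(-1)^{\text{size} - \text{number of cycles}}$; its final displayed line carries the prefactor $(-1)^{k+1}$, which agrees neither with the theorem as stated nor with the correct answer. So your derivation is correct, and the theorem should be read with $(-1)^{k}$ (equivalently $(-1)^{k-\ell(\mu)}$ attached to each $\zeta^{\mu}$) in place of $(-1)^{j+1}$.
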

\begin{proof}
Due to the observations above, we can simplify equation \ref{hook1} as follows: Fix $\sigma(1) = k+1-j$, now each permutation gives a unique composition $\mu$ of $j$, so we sum over all compositions of $j$. And we do this over all possible images $\sigma(1)$, which can be any integer from 1 to $k+1$. This translates to $j$ varying from $0$ to $k$.
\begin{align*}
    \chi^{\lambda[n]} &= \sum_{\sigma \in \overline{\mathfrak{S}_{k+1}}} \sgn(\sigma) \zeta^{\widetilde{\sigma \lambda}} \\
     &= \sum_{\sigma \in \overline{\mathfrak{S}_{k+1}}} \sgn(\sigma) \zeta^{\left< \widetilde{\sigma \lambda} \right>} \\
 &= \sum_{j=0}^k (-1)^{k+1 - j} \left( \sum_{\mu \vDash j} (-1)^{\sum \mu +1} \zeta^\mu \right) \\
 &= \sum_{j=0}^k (-1)^{k+1 - j} \left( \sum_{\mu \vDash j} (-1)^{\ell(\mu) + j} \zeta^\mu \right) \\
 &= (-1)^{k+1 }  \sum_{j=0}^k \left( \sum_{\mu \vDash j} (-1)^{\ell(\mu)} \zeta^\mu \right)
\end{align*}
In above, we appropriately kept track of the sign as the cycle type of $\sigma$ with $\sigma(1) = k+1-j$ that give rise to composition $\mu$ is given by the weakly decreasing rearrangement of $(k+1-j, \mu_1+1,  \cdots, \mu_\ell +1)$. This gives
\begin{align*}
    \sgn(\sigma) &= \bigg(\sum_{i=1}^{\ell(\mu)} (\mu_i + 1) \bigg)  + k +1 - j \\
    &= \ell(\mu) + k +1 
\end{align*}
This explains the simplification in the sum above.
\end{proof}



Now we move to an interesting observation that simplifies further the calculation of $\chi^{\lambda[n]}$ for $\lambda = (1^k) \vdash k$ where $n \geq 2k$. This result should be thought of as the main result of this paper.

\begin{theorem} \label{hook3}
For a positive integer $j \leq n$, we have the following equality of class functions
\begin{align*}
 \sum_{\mu \vDash j} (-1)^{\ell(\mu)} \zeta^\mu &= \sum_{\mu \vdash j}  (-1)^{\ell(\mu)} \eta^\mu
\end{align*}
on $\mathfrak{S}_n$
\end{theorem}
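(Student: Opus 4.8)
The plan is to prove the identity pointwise, evaluating both sides at an arbitrary $w \in \mathfrak{S}_n$ and counting \emph{brickwise}, i.e. organizing every tiling (or subset) according to the set $T \subseteq B_w$ of bricks it actually uses. Since both a composition $\mu \vDash j$ and a partition $\mu \vdash j$ force the total covered area to equal $j$, the only subsets that contribute are those $T \subseteq B_w$ with $\big|\bigcup_{U \in T} U\big| = j$. I would fix such a $T$, compute the contribution each side receives from tilings (resp.\ subsets) built from exactly the bricks of $T$, and then sum over $T$. The target is to show that every admissible $T$ contributes the same quantity, namely $(-1)^{|T|}$, to each side.

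For the right-hand side this is immediate from \ref{brickwisesh}: a term $\eta^\mu(w)$ counts the subsets $T \subseteq B_w$ with $\sh(T) = \mu$, and since $\sh(T)$ is the sorted list of lengths of the bricks in $T$ we have $\ell(\sh(T)) = |T|$. Hence
\begin{align*}
\sum_{\mu \vdash j} (-1)^{\ell(\mu)} \eta^\mu(w)
= \sum_{\substack{T \subseteq B_w \\ |\bigcup_{U \in T} U| = j}} (-1)^{\ell(\sh(T))}
= \sum_{\substack{T \subseteq B_w \\ |\bigcup_{U \in T} U| = j}} (-1)^{|T|},
\end{align*}
so each admissible $T$ contributes exactly $(-1)^{|T|}$, as desired.

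For the left-hand side I would unwind the definition of an ordered brick tiling. A tuple $(S_1, \dots, S_r)$ that tiles a composition $\mu = (\mu_1, \dots, \mu_r)$ using precisely the bricks of $T$ is nothing but an ordered set partition of $T$ into $r$ nonempty blocks, with $\mu_i = \big|\bigcup_{U \in S_i} U\big|$ and $r = \ell(\mu)$; the blocks are nonempty because each $\mu_i \geq 1$. Summing over all compositions of $j$ thus amounts to summing $(-1)^r$ over all ordered set partitions of $T$ into nonempty blocks. Since the number of ordered set partitions of an $|T|$-element set into $r$ nonempty blocks equals the number of surjections from a set of size $|T|$ onto $\{1, \dots, r\}$, namely $r!\,\begin{Bmatrix} |T| \\ r \end{Bmatrix}$, I obtain
\begin{align*}
\sum_{\mu \vDash j} (-1)^{\ell(\mu)} \zeta^\mu(w)
= \sum_{\substack{T \subseteq B_w \\ |\bigcup_{U \in T} U| = j}} \ \sum_{r=1}^{|T|} (-1)^r\, r!\, \begin{Bmatrix} |T| \\ r \end{Bmatrix}.
\end{align*}

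The crux is the inner alternating sum, which by the permutohedron identity \ref{altperm} (with $n = |T|$) collapses to $(-1)^{|T|}$ for each $T$. This is the only nontrivial input, and it is exactly where the topology of $\Delta_{n-1} = \partial \Pi_n^*$ enters: the Euler characteristic of a sphere reduces the signed count of ordered set partitions to the single sign $(-1)^{|T|}$. Once this is in place, both sides become $\sum_{T} (-1)^{|T|}$ over the same indexing set, and the theorem follows. I expect the main obstacle to be purely organizational rather than deep: verifying that the correspondence between tiling tuples $(S_1,\dots,S_r)$ and ordered set partitions of $T$ is a genuine bijection, and that regrouping by $T$ tracks the sign $(-1)^{\ell(\mu)} = (-1)^r$ correctly. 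The alternative route flagged in the introduction would instead interpret the left-hand sign-sum as a reduced Euler characteristic of the order complex of the poset of brick tilings of $T$, but I would favor the direct permutohedron computation since \ref{altperm} is already established.
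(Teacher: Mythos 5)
Your argument is correct and is essentially the paper's own counting proof: both decompose everything brickwise over subsets $T \subseteq B_w$ of total length $j$, identify the left-hand contribution of each $T$ with the signed count of ordered set partitions $\sum_{r}(-1)^r r!\begin{Bmatrix} |T| \\ r \end{Bmatrix}$, and collapse it to $(-1)^{|T|}$ via the permutohedron identity \ref{altperm}. The only cosmetic difference is that you pass directly from ordered tilings to ordered set partitions of $T$, whereas the paper first rewrites $\sum_{\tilde{\lambda}=\mu}\zeta^\lambda$ as $\ell(\mu)!\,\xi^\mu$ before splitting over $T$; the genuinely different route (homology of the tiling poset) is the one you flagged but did not take.
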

The left-hand side is an alternating sum over compositions of $j$, while the right-hand side is an alternating sum over partitions of $j$. It is worth noticing that not only the sum on the right has fewer terms but even each term is \emph{smaller}. In the sections to follow, we provide two proofs of this result.

    \subsection{The Tiling poset}

Fix a positive integer $j \leq n$ and $w \in \mathfrak{S}_n$, then we can consider the set $\Til(w;j)$ of all ordered brick tilings of all compositions of $k$ by $w$, that is
\begin{align*}
\Til(w;j) &:= \bigcup \left\{ \widetilde{B}_w(\lambda) : \lambda \vDash j \right\}
\end{align*}
We equip this set with a partial order that is induced by the following covering relations: For two ordered brick tilings $A' = (A'_1, \dots, A'_{k+1})$ and $A = (A_1, \dots, A_k)$ in $\Til(w;j)$, we say $A$ covers $A'$, and write $A' \lessdot A$ if both of the following conditions are satisfied:
\begin{itemize}
\item
$\sh(A') \lessdot \sh(A)$ in $\Comp(k)$.
\item
There exists a unique $t \in \{1, \cdots, k\}$ such that for all $i = 1, \dots, k$
\begin{align*}
A_i &= \begin{cases}
A'_i ~~~&\text{for}~i<t \\
A'_i \cup A'_{i+1}~~~&\text{for}~i=t \\
A'_{i+1}~~~&\text{for}~i>t
\end{cases}
\end{align*}
\end{itemize}

\begin{example}
For $w = (31)(4)(5,2)$, and $j=5$, we show the Hasse diagram of the poset $\Til(w;j)$ in figure \ref{wk4}. 
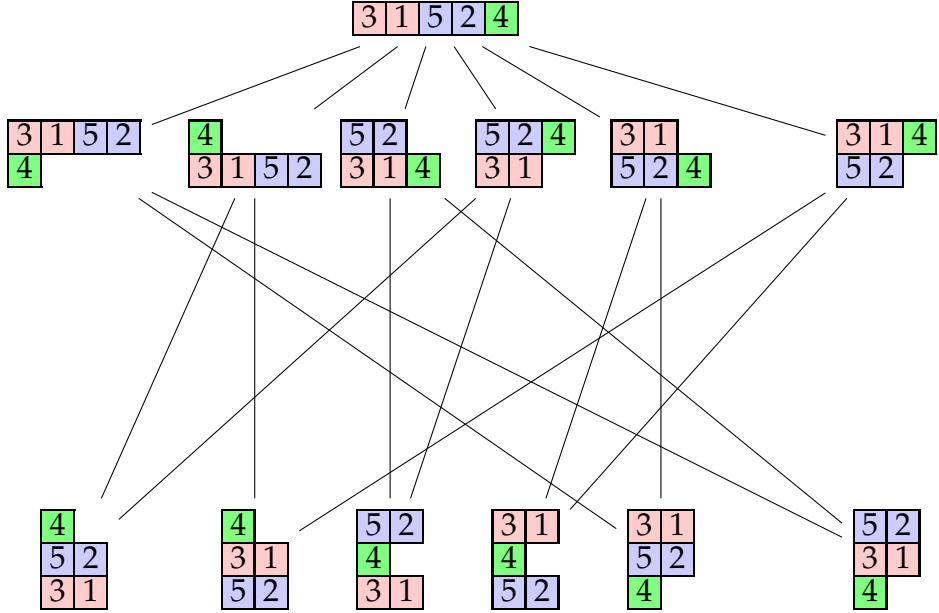
\begin{figure} \label{wk4}
    \centering
    \begin{tikzpicture}[scale=.3]
  \node (m) at (40,0) {\ytableausetup{boxsize=1em}\ytableaushort{52,31,4}*[*(red!20)]{0,2}*[*(blue!20)]{2}*[*(green!50)]{0,0,1}};
  \node (l) at (30,0) {\ytableausetup{boxsize=1em}\ytableaushort{31,52,4}*[*(red!20)]{2}*[*(blue!20)]{0,2}*[*(green!50)]{0,0,1}};
  \node (k) at (24,0)  {\ytableausetup{boxsize=1em}\ytableaushort{31,4,52}*[*(red!20)]{2}*[*(green!50)]{0,1}*[*(blue!20)]{0,0,2}} ;
  \node (j) at (18,0)  {\ytableausetup{boxsize=1em}\ytableaushort{52,4,31}*[*(red!20)]{0,0,2}*[*(blue!20)]{2}*[*(green!50)]{0,1}};
  \node (i) at (12,0)  {\ytableausetup{boxsize=1em}\ytableaushort{4,31,52}*[*(red!20)]{0,2}*[*(blue!20)]{0,0,2}*[*(green!50)]{1}};
  \node (h) at (4,0)  {\ytableausetup{boxsize=1em}\ytableaushort{4,52,31}*[*(red!20)]{0,0,2}*[*(blue!20)]{0,2}*[*(green!50)]{1}} ;
  \node (g) at (40,18) {\ytableausetup{boxsize=1em}\ytableaushort{314,52}*[*(red!20)]{2}*[*(blue!20)]{0,2}*[*(green!50)]{3}};
  \node (f) at (30,18) {\ytableausetup{boxsize=1em}\ytableaushort{31,524}*[*(red!20)]{2}*[*(blue!20)]{0,2}*[*(green!50)]{0,3}} ;
  \node (e) at (24,18)  {\ytableausetup{boxsize=1em}\ytableaushort{524,31}*[*(red!20)]{0,2}*[*(blue!20)]{2}*[*(green!50)]{3}} ;
  \node (d) at (18,18)  {\ytableausetup{boxsize=1em}\ytableaushort{52, 314}*[*(red!20)]{0,2}*[*(blue!20)]{2,0}*[*(green!50)]{0,3}};
  \node (c) at (12,18)   {\ytableausetup{boxsize=1em}\ytableaushort{4,3152}*[*(red!20)]{0,2}*[*(blue!20)]{0,4}*[*(green!50)]{1}};
  \node (b) at (4,18)  {\ytableausetup{boxsize=1em}\ytableaushort{3152,4}*[*(red!20)]{2}*[*(blue!20)]{4}*[*(green!50)]{0,1}};
  \node (a) at (20,24) {\ytableausetup{boxsize=1em}\ytableaushort{31524}*[*(red!20)]{2}*[*(blue!20)]{4}*[*(green!50)]{5}};
   \draw (a)  -- (b); \draw  (a) -- (c) ;\draw  (a) -- (d) ; \draw  (a) -- (e); \draw  (a) -- (f); \draw  (a) -- (g);
  \draw (b)  -- (l); \draw  (b) -- (m) ;
  \draw (c)  -- (h); \draw (c)  -- (i); 
  \draw (d)  -- (j); \draw (d)  -- (m); 
 \draw  (e) -- (h) ;\draw  (e) -- (j) ;
\draw  (f) -- (k) ;\draw  (f) -- (l) ;
 \draw (g)  -- (i); \draw (g)  -- (k);
\ytableausetup{boxsize=1em}
\end{tikzpicture}
    \caption{Poset $\Til(w;j)$ for $w = (3,1)(4)(5,2)$ and $j =4$}
    \label{tilingposet}
\end{figure}
\end{example}

Notice that in general $\Til(w;j)$ can be thought of as disjoint union of smaller posets, that is,
\begin{equation} \label{partition}
    \Til(w; j) = \bigsqcup_{ \substack{T \subseteq B_w} }  \Til(T; j)
\end{equation}
where $\Til(T;j)$ is the set of all ordered brick tilings of all compositions of $j$, whose brick set is $T$. From the poset $\Til(w; j)$, we define following chain complex over field with 2 elements $\mathbb{F}_2$
\begin{align*}
0 \longrightarrow C_{j-1} \longrightarrow C_{j-2} \longrightarrow \cdots \longrightarrow C_{1} \longrightarrow C_{0} \longrightarrow 0
\end{align*}
where the chains are defined as
\begin{align*}
C_i = \mathbb{F}_2\left\{ e_A:  A \in \Til(w;j)~\text{such that}~ \sh(A) \vDash_{j-i} j\right\}
\end{align*}
where $e_T$ are just formal symbols. And the differentials is induced by the down map of the poset, i.e. for $i=1, \dots, j-1$ 
\begin{align*}
\partial_i : C_i &\longrightarrow C_{i-1} \\
e_A &\longmapsto\sum_{A' \lessdot T}  e_{A'}
\end{align*}
To show we indeed have a chain complex we still need to show that $\partial^2 = 0$. Without loss of generality, assume that there exist some $A''< A$ such that $[A'', A]$ is an interval of length 2 . Then let
\begin{align*}
\partial_{i-1} \circ \partial_{i} (e_A) = \sum \alpha_{AA''}e_{A''}
\end{align*}
where the  sum is over all $A''$ for which there exists $A'$ such that $A'' \lessdot A' \lessdot A$ and the coefficients are given by 
\begin{align*}
\alpha_{AA''} &= |\{ A': \text{such that}~A''\lessdot A' \lessdot A\}| \pmod{2}
\end{align*}
Let $\beta_{AA''} = |\{ A': \text{such that}~A''\lessdot A' \lessdot A\}|$. We claim that for every pair $(A'',A)$ such that there exists $A'$ with $A'' \lessdot A' \lessdot A$, we have $\beta_{AA''} = 2$. Let $A''= (A''_1, A''_2, \dots, A''_{k+2})$ and $A =  (A_1, A_2, \dots, A_{k})$, then we can have one of the following situations:
\begin{itemize}

\item
We have two indices $a, b$, with $a<b$, such that
\begin{align*}
A_i &= 
\begin{cases}
A''_i ~~~&\text{if}~i < a \\
A''_i \cup A''_{i+1}~~~&\text{if}~ i =a \\
A''_{i+1}~~~&\text{if}~ a \leq i \leq b \\
A''_{i+1} \cup A''_{i+2}~~~&\text{if}~i = b \\
A''_{i+2}~~~&\text{if}~b < i \leq k
\end{cases}
\end{align*}
In this case, there are two possible $A'$: $(A''_1, \dots, A''_a \cup A''_{a+1}, \dots, A''_{k+2})$ or  $(A''_1, \dots, A''_b \cup A''_{b+1}, \dots, A''_{k+2})$. Then, $\beta_{AA''} = 2$.

\item
We have an index $a$ such that
\begin{align*}
A_i &= 
\begin{cases}
A''_i ~~~&\text{if}~i < a \\
A''_i \cup A''_{i+1} \cup  A''_{i+2}~~~&\text{if}~ i =a \\
A''_{i+2}~~~&\text{if}~a < i\leq k
\end{cases}
\end{align*}
In this case, $A'= (A''_1, \dots, A''_a \cup A''_{a+1}, \dots, A''_{k+2})$ or  $A'=(A''_1, \dots, A''_{a+1} \cup A''_{a+2}, \dots, A''_{k+2})$, and we also have that $\beta_{AA''} = 2$.
\end{itemize}
Since we are dealing with coefficients $\mathrm{mod}~ 2$, in both cases $\alpha_{AA''} = 0$. This proves that $\partial^2 = 0$, and so we indeed have a chain complex. To illustrate the idea, we show an example


\begin{example}
For $w = (3,1)(5, 2)(4)$, we get the following for $k = 5$
\begin{align*}
\partial ~\ytableausetup{boxsize=1em}\ytableaushort{31524}*[*(red!20)]{2}*[*(blue!20)]{4}*[*(green!50)]{5} &= {\ytableausetup{boxsize=1em}\ytableaushort{31,524}*[*(red!20)]{2}*[*(blue!20)]{0,2}*[*(green!50)]{0,3}} +
{\ytableausetup{boxsize=1em}\ytableaushort{524,31}*[*(red!20)]{0,2}*[*(blue!20)]{2}*[*(green!50)]{3}} + {\ytableausetup{boxsize=1em}\ytableaushort{3152,4}*[*(red!20)]{2}*[*(blue!20)]{4}*[*(green!50)]{0,1}} +
{\ytableausetup{boxsize=1em}\ytableaushort{4,3152}*[*(red!20)]{0,2}*[*(blue!20)]{0,4}*[*(green!50)]{1}} + {\ytableausetup{boxsize=1em}\ytableaushort{314,52}*[*(red!20)]{2}*[*(blue!20)]{0,2}*[*(green!50)]{3}} +  {\ytableausetup{boxsize=1em}\ytableaushort{52, 314}*[*(red!20)]{0,2}*[*(blue!20)]{2,0}*[*(green!50)]{0,3}} \\
\partial~ {\ytableausetup{boxsize=1em}\ytableaushort{31,524}*[*(red!20)]{2}*[*(blue!20)]{0,2}*[*(green!50)]{0,3}} &= {\ytableausetup{boxsize=1em}\ytableaushort{31,52,4}*[*(red!20)]{2}*[*(blue!20)]{0,2}*[*(green!50)]{0,0,1}} + {\ytableausetup{boxsize=1em}\ytableaushort{31,4,52}*[*(red!20)]{2}*[*(green!50)]{0,1}*[*(blue!20)]{0,0,2}} \\
\partial~{\ytableausetup{boxsize=1em}\ytableaushort{524,31}*[*(red!20)]{0,2}*[*(blue!20)]{2}*[*(green!50)]{3}}  &=  {\ytableausetup{boxsize=1em}\ytableaushort{52,4,31}*[*(red!20)]{0,0,2}*[*(blue!20)]{2}*[*(green!50)]{0,1}} + {\ytableausetup{boxsize=1em}\ytableaushort{4,52,31}*[*(red!20)]{0,0,2}*[*(blue!20)]{0,2}*[*(green!50)]{1}} \\
\partial~{\ytableausetup{boxsize=1em}\ytableaushort{3152,4}*[*(red!20)]{2}*[*(blue!20)]{4}*[*(green!50)]{0,1}} &= 
{\ytableausetup{boxsize=1em}\ytableaushort{31,52,4}*[*(red!20)]{2}*[*(blue!20)]{0,2}*[*(green!50)]{0,0,1}} + {\ytableausetup{boxsize=1em}\ytableaushort{52,31,4}*[*(red!20)]{0,2}*[*(blue!20)]{2}*[*(green!50)]{0,0,1}}\\
\partial~{\ytableausetup{boxsize=1em}\ytableaushort{4,3152}*[*(red!20)]{0,2}*[*(blue!20)]{0,4}*[*(green!50)]{1}} &= 
{\ytableausetup{boxsize=1em}\ytableaushort{4,31,52}*[*(red!20)]{0,2}*[*(blue!20)]{0,0,2}*[*(green!50)]{1}} +  {\ytableausetup{boxsize=1em}\ytableaushort{4,52,31}*[*(red!20)]{0,0,2}*[*(blue!20)]{0,2}*[*(green!50)]{1}} \\
\partial~{\ytableausetup{boxsize=1em}\ytableaushort{314,52}*[*(red!20)]{2}*[*(blue!20)]{0,2}*[*(green!50)]{3}} &= 
{\ytableausetup{boxsize=1em}\ytableaushort{31,4,52}*[*(red!20)]{2}*[*(blue!20)]{0,0,2}*[*(green!50)]{0,1}} + {\ytableausetup{boxsize=1em}\ytableaushort{4,31,52}*[*(red!20)]{0,2}*[*(blue!20)]{0,0,2}*[*(green!50)]{1}} \\
\partial~{\ytableausetup{boxsize=1em}\ytableaushort{52, 314}*[*(red!20)]{0,2}*[*(blue!20)]{2,0}*[*(green!50)]{0,3}}  &= 
{\ytableausetup{boxsize=1em}\ytableaushort{52,31,4}*[*(red!20)]{0,2}*[*(blue!20)]{2}*[*(green!50)]{0,0,1}} +  {\ytableausetup{boxsize=1em}\ytableaushort{52,4,31}*[*(red!20)]{0,0,2}*[*(blue!20)]{2}*[*(green!50)]{0,1}} \\
\partial~{\ytableausetup{boxsize=1em}\ytableaushort{31,52,4}*[*(red!20)]{2}*[*(blue!20)]{0,2}*[*(green!50)]{0,0,1}} &= \partial~{\ytableausetup{boxsize=1em}\ytableaushort{31,4,52}*[*(red!20)]{2}*[*(green!50)]{0,1}*[*(blue!20)]{0,0,2}} = \partial~ {\ytableausetup{boxsize=1em}\ytableaushort{52,31,4}*[*(red!20)]{0,2}*[*(blue!20)]{2}*[*(green!50)]{0,0,1}} = \partial~ {\ytableausetup{boxsize=1em}\ytableaushort{52,4,31}*[*(red!20)]{0,0,2}*[*(blue!20)]{2}*[*(green!50)]{0,1}} = \partial~ {\ytableausetup{boxsize=1em}\ytableaushort{4,31,52}*[*(red!20)]{0,2}*[*(blue!20)]{0,0,2}*[*(green!50)]{1}} = \partial~ {\ytableausetup{boxsize=1em}\ytableaushort{4,52,31}*[*(red!20)]{0,0,2}*[*(blue!20)]{0,2}*[*(green!50)]{1}} = 0
\end{align*}
\end{example}
\noindent An element $J \in C_i$ for some positive integer $i < j$, can be written as
\begin{align*}
    J &= \sum_{T \in L} e_T 
\end{align*}
The set $L$ is called the \emph{support} of $J$, and we denote it by $\supp(J)$. For a subspace $V$ of the chain complex, let $\overline{V}$ be the subspace of $V$ generated by elements in $V$ whose support is crackless brick tilings, and let $\widetilde{V}$ be the subspace of $V$ generated by elements in $V$ whose support is cracked brick tilings. An interesting consequence of equation \ref{partition} is the following decomposition:
\begin{align*}
    \ker \partial_i &= \widetilde{\ker \partial_i} \oplus \overline{\ker \partial_{i}}~~~~\text{and}~~~~\im \partial_{i+1} = \widetilde{\im \partial_{i+1}} \oplus \overline{\im \partial_{i+1}}
\end{align*}
One inclusion is obvious. The other inclusion is the consequence of the fact that a cracked brick tiling and a crackless brick tiling with $j-i$ parts cannot have the same set of bricks for all $i=1, \cdots, j-1$. Therefore the decomposition follows from the partition \ref{partition}. Hence, we can write the homology of the above chain complex as
\begin{align*}
H_i(C) &= \frac{\text{ker} \partial_{i}}{ \text{im} \partial_{i+1}} \\
&= \frac{\widetilde{\text{ker} \partial_{i}} \oplus \overline{\text{ker} \partial_{i}}}{ \widetilde{\text{im} \partial_{i+1}} \oplus  \overline{\text{im} \partial_{i+1}}} 
\end{align*}
Notice in proving $\partial^2 = 0$, we concluded something more than that. We proved that each subinterval of rank 2 in $\Til(w;j)$ is isomorphic to the boolean interval of length $B(2)$ as posets, which implies $\widetilde{\text{im} \partial_{i+1}} = \widetilde{\text{ker} \partial_{i}}$.  This simplifies the above expression to
\begin{align*}
H_i(C)  &= \frac{\overline{\text{ker} \partial_{i}}}{ \overline{\text{im} \partial_{i+1}}} 
\end{align*}
Now notice that every crackless brick tiling with $k-i$ parts  is in the kernel of $\partial_i$, hence
\begin{align*}
H_i(C)&= \frac{\overline{C}_i}{ \overline{\text{im} \partial_{i+1}}}
\end{align*}
We know that $\im \partial_{i+1}$ is generated by $\partial_i e_A$ for $e_A \in C_{i+1}$. The crackless part of $\im  \partial_{i+1}$ is generated by image of $e_A \in C_{i+1}$ which have exactly one crack, and for each such $A$ we have
\begin{align*}
\partial_{i+1} e_A &= e_{B} + e_{B'}
\end{align*}
where $B$ and $B'$ you get by ``\emph{cracking}'' $A$ in either way.
This implies we can thought of $H_i(C)$ as
\begin{align*}
H_i(C) &= \frac{\overline{C}_i}{(e_{B} = e_{B'})} 
\end{align*}
Since this says $e_{B} = e_{B'}$, whenever the Young diagram of $B$ differs from $B'$ by a simple transposition. Now notice that if $e_A \in \overline{C}_i$, then for every permutation $A'$ of $A$, $e_{A'} \in \overline{C}_i$. But the relation $e_{B} = e_{B'}$ identify all of them since all permutations are generated by transpositions. This implies $H_i(C)$ is generated by $e_A$, where $A$ is an unordered crackless brick tiling with $j-i$ parts. Hence
\begin{equation} \label{homology}
\dim H_i(C) = \sum_{\substack{\mu \vdash j \\ \ell(\mu) = j-i}} \eta^\mu(w)
\end{equation}
Now using this, we provide a proof for theorem \ref{hook3}.

\begin{proof}
Given positive integer $j \leq n$, for any permutation $w \in \mathfrak{S}_n$, we construct the poset $\Til(w;j)$, and consider the chain complex associated to it as defined above. The construction implies that the $\dim(C_i)$ is given by
\begin{align*}
    \dim(C_i) &= \sum_{\substack{\mu \vDash j \\ \ell(\mu) = j-i}} \zeta^\mu(w)
\end{align*}
Now we can compute Euler characteristic of this chain complex as an alternating sum of these dimensions, which gives
\begin{align*}
    \sum_{i = 0}^{j-1} (-1)^i\dim(C_i) &= \sum_{i=0}^{j-1} (-1)^{i} \sum_{\substack{ \mu \vDash j  \\ \ell(\mu) = j-i} } \zeta^\mu (w) \\
    &= \sum_{\mu \vDash j} (-1)^{j-\ell(\mu)} \zeta^\mu (w)
\end{align*}
while, computing the Euler characteristic via alternating sum of dimensions of homologies (employing equation \ref{homology} ) gives
\begin{align*}
    \sum_{i=0}^{j-1} (-1)^i\dim(H_i) &=  \sum_{i=0}^{j-1} (-1)^{i} \sum_{\substack{ \mu \vdash j  \\ \ell(\mu) = j-i} } \eta^\mu (w) \\
    &= \sum_{\mu \vdash j} (-1)^{j-\ell(\mu)} \eta^\mu (w)
\end{align*}
Since both of the them are Euler characteristic of the same complex, hence they are equal for each $w \in \mathfrak{S}_n$. This implies that the equality holds as equality of class functions on $\mathfrak{S}_n$.
\end{proof}

    \subsection{Counting proof}
        Though the proof using homology on brick tiling poset was quite interesting, we were also tempted to provide a counting proof. This is given below:

\begin{proof}
Since $\zeta^\lambda = \zeta^\mu$ for $\mu \vdash j$ and $\lambda \vDash j$ such that $\tilde{\lambda} = \mu$, we can write
\begin{align*}
 \sum_{\mu \vDash j} (-1)^{\ell(\mu) } \zeta^\mu &=  \sum_{\mu \vdash j}  (-1)^{\ell(\mu)}   \sum_{\tilde{\lambda} = \mu} \zeta^\lambda
\end{align*}
The sum $\sum_{\tilde{\lambda} = \mu} \zeta^\lambda(w)$  counts the number of ordered tilings by $w$ of all compositions $\lambda$ whose underlying partition is $\mu$. This can also be regarded as counting all permutations (of parts) of unordered brick tilings of $\mu$. Therefore, we can rewrite
\begin{align*}
\sum_{\mu \vdash j}  (-1)^{\ell(\mu)}   \sum_{\tilde{\lambda} = \mu} \zeta^\lambda(w) &= \sum_{\mu \vdash j}  (-1)^{\ell(\mu)}   \ell(w)!  \xi^\mu(w) 
\end{align*}
where $\xi^\mu(w)$ is the number of unordered tilings of $\mu$ by $w$. For a subset of bricks $T \subseteq B_w$, let $\xi^\mu_T$ be the number of unordered brick tilings of $\mu$ by $T$, then we can count the sum over $T$'s
\begin{align*}
\sum_{\mu \vdash j}  (-1)^{\ell(\mu)}   \ell(\mu)!  \xi^\mu(w)  &= \sum_{\mu \vdash j}  (-1)^{\ell(\mu)}   \ell(\mu)!  \sum_{T \subseteq B_w} \xi^\mu_T \\
&=  \sum_{T \subseteq B_w} \sum_{\mu \vdash j}  (-1)^{\ell(\mu)}   \ell(\mu)!  \xi^\mu_T \\
&=  \sum_{T \subseteq B_w} \sum_{k=1}^j \sum_{\substack{\mu \vdash j \\ \ell(\mu) = k}}  (-1)^{\ell(\mu)}   \ell(\mu)!  \xi^\mu_T\\
&=  \sum_{T \subseteq B_w} \sum_{k=1}^j    (-1)^{k}  k!  \sum_{\substack{\mu \vdash j \\ \ell(\mu) = k}} \xi^\mu_T
\end{align*}
Since the tilings counted in $\sum_{\substack{\mu \vdash j \\ \ell(\mu) = k}} \xi^\mu_T$ is in one to one correspondence with unordered partitions of $T$ into $k$ sets. These are counted by Stirling numbers of second kind $\begin{Bmatrix} \ell(\sh(T))\\ k \end{Bmatrix}$.
\begin{align*}
\sum_{T \subseteq B_w} \sum_{k=1}^j    (-1)^{k}  k!  \sum_{\substack{\mu \vdash j \\ \ell(\mu) = k}} \xi^\mu_T(\pi) &=  \sum_{T \subseteq B_w} \sum_{k=1}^j    (-1)^{k}  k!   \begin{Bmatrix} \ell(\sh(T))\\ k \end{Bmatrix} 
\end{align*}
We have encountered the inner alternating sum in before, and from equation \ref{altperm} it equals $(-1)^{\ell(\sh(T))}$. Hence,
\begin{align*}
 \sum_{\mu \vDash j} (-1)^{\ell(\mu) } \zeta^\mu &=  \sum_{T \subseteq B_w} (-1)^{\ell(\sh(T))} \\
    &=  \sum_{\mu \vdash j} \sum_{\substack{T \subseteq B_w \\  \sh(T) = \mu} }  (-1)^{\ell(\mu)} \\
     &=  \sum_{\mu \vdash j}  (-1)^{\ell(\mu)}  \sum_{\substack{T \subseteq B_w \\  \sh(T) = \mu} } 1
\end{align*}
Now the interior sum just counts the number of unordered tilings of $\mu$ by $B_w$ as indicated by equation \ref{brickwisesh}. Therefore
\begin{align*}
 \sum_{\mu \vDash j} (-1)^{\ell(\mu) } \zeta^\mu  &= \sum_{\mu \vdash j}  (-1)^{\ell(\mu)}  \eta^\mu(w)
\end{align*}

\end{proof}

\noindent Let us illustrate the identity \ref{hook3} using a small example:

\begin{example}
In the table \ref{j=4}, we take the case of $j=4$.
\begin{table}[h]
    \centering
    \begin{tabular}{ccccccccccc}
        $\zeta^{(4)} $ & $=$ & $\eta^{(4)}$ & $+$ & $\eta^{(3,1)}$ & $+$ &  $\eta^{(2,2)}$ & $+$ & $\eta^{(2, 1, 1)}$  & $+$ & $\eta^{(1,1,1,1)}$  \\
        $\zeta^{(3,1)}$ & $=$ &  & & $\eta^{(3,1)}$ &  & & & $2 \eta^{(2, 1, 1)}$ & $+$ & $4 \eta^{(1, 1, 1, 1)}$ \\
        $\zeta^{(1,3)}$ & $=$ &  & & $\eta^{(3,1)}$ &  & & & $2 \eta^{(2, 1, 1)}$ & $+$ & $4 \eta^{(1, 1, 1, 1)}$ \\
        $\zeta^{(2, 2)}$ & $=$ & & & & &  $2\eta^{(2, 2)}$ & $+$ & $2 \eta^{(2,1,1)}$ & $+$ & $12 \eta^{(1,1,1,1)}$ \\
        $\zeta^{(2,1,1)}$ & $=$ & & & & & & & $2 \eta^{(2, 1, 1)}$ & $+$ & $6 \eta^{(1, 1, 1, 1)}$ \\
        $\zeta^{(1,2,1)}$ & $=$ & & & & & & & $2 \eta^{(2, 1, 1)}$ & $+$ & $6 \eta^{(1, 1, 1, 1)}$ \\
        $\zeta^{(1,1,2)}$ & $=$ & & & & & & & $2 \eta^{(2, 1, 1)}$ & $+$ & $6 \eta^{(1, 1, 1, 1)}$ \\
        $\zeta^{(1, 1, 1, 1)}$ & $=$ & & & & & & & &  & $24 \eta^{(1, 1, 1, 1)}$
    \end{tabular}
    \caption{Example for $j=4$}
    \label{j=4}
\end{table}
From the table \ref{j=4}, we can see that $ \sum_{\mu \vDash 4} (-1)^{\ell(\mu) } \zeta^\mu$ equals
\begin{align*}
    - \eta^{(4)} + \eta^{(3,1)}  + \eta^{(2,2)} - \eta^{(2,1,1)} + \eta^{(1,1,1,1)}
\end{align*}

\end{example}

\section{Applications}
    In order to continue our study for generating functions of character polynomals, and apply our results to get some previously known identities we define the notion of stability in first subsection. The generating function relevant to our discussion is cycle-index generating function. We go back to our main identity \ref{hook3} and use it to derive Goupil's generating function identity \cite{goupil} for hook partitions. Lastly, combining this identity with stability of cycle-index generating function, we were able to provide an alternating proof of Rosas' formula \cite{rosas}.

    \subsection{Stability for sequence of polynomials and power series}
    
We say a sequence $(f_n)_{n\in \mathbb{N}}$ of polynomials $f_n \in \mathbb{Q}[x]$, \emph{stabilizes} to $f \in \mathbb{Q}[[x]]$ if for each positive integer $k$, there exists a positive integer $N_k$ such that for all $m, n > N_k$, we can find $f \in \mathbb{Q}[x]$ satisfying
\begin{align*}
[x^k]f_m &= [x^k] f_n = [x^k]f
\end{align*}
where $[x^k]f$ denote the coefficient of $x^k$ in $f$. If $f$ exists, then it is seen to be unique. This definition can also be generalized to multivariable case. We say a sequence $(f_n)_{n \in \mathbb{N}}$ of multivariable polynomials $f_n \in \mathbb{Q}[x_1, x_2, \cdots, x_r]$, \emph{stabilizes} to $f \in \mathbb{Q}[[x_1, \cdots, x_r]]$ if for each sequence of positive integers $k = (k_1, \cdots, k_r)$, there exists a positive integer $N_k$ such that for all $m, n > N_k$, we can find $f \in \mathbb{Q}[x]$ satisfying
\begin{align*}
[x_1^{k_1} \cdots x_r^{k_r}]f_m &= [x_1^{k_1} \cdots x_r^{k_r}] f_n = [x^k]f
\end{align*}
\begin{example}
Let $f_n(x) = x^n + x^{n-1} + \dots + x + 1$ is a sequence which stabilizes to $f(x) = \frac{1}{1-x}$. On the other hand the sequence $f_n(x) = x^n$ does not stabilize. An example for multivariable case would be the sequence
\begin{align*}
    g_n(x, y) &= \sum_{i+j = n} x^i y^j
\end{align*}
this sequence stabilize to
\begin{align*}
    g(x, y) &= \frac{1}{1-x} \frac{1}{1-y}
\end{align*}
\end{example}
For a commutative ring $R$ and an element $f \in  R[[t]]$, we say \emph{coefficients of $f$ stabilize} if there exists a least integer $k$ such that for all $n > k$, we have
\begin{align*}
[t^n] f = [t^{n+1}]f
\end{align*}
The limiting coefficient is called \emph{coefficient of stabilization}, and such a $k$ is called \emph{point of stability}.

\begin{example}
Given a polynomial $p(t) \in \mathbb{Q}[t]$, the coefficents of the power series
\begin{align*}
\frac{p(t)}{1-t} \in \mathbb{Q}[[t]]
\end{align*}
stabilizes with coefficient of stabilization $p(1)$ and point of stability is $\deg(p)+1$. 
\end{example}
We require a generalization of the above notion for the ring $\mathbb{Q}[c_1, c_2, \cdots ][[t]]$ of formal power series. Consider an element $f \in \mathbb{Q}[c_1, c_2, \cdots ][[t]]$, given by
\begin{align*}
    f &= \sum_{n=0}^\infty f_n t^n
\end{align*}
where $f_n \in \mathbb{Q}[c_1, c_2, \cdots]$ are polynomials in $c_i$'s. We say $f$ \emph{stabilizes} if the sequence of polynomials $(f_n)_{n \in \mathbb{N}}$ stabilizes to some $g$. In that case, there exists sequence $(m_1, \cdots, m_r)$ such that we have the equality 
\begin{align*}
    [c_1^{k_1} \cdots c_r^{k_r}] f &=  [c_1^{k_1} \cdots c_r^{k_r}]\frac{g}{1-t}
\end{align*}
for $k_i > m_i$ for all $i$.

\begin{example}
Consider the element $f \in \mathbb{Q}[x][[t]]$ given by
\begin{align*}
    f &= 1 + (1+x)t + (1 + x + x^2)t + \cdots
\end{align*}
then $g = \frac{1}{1-x}$, and therefore $f$ stabilizes to
\begin{align*}
    \frac{1}{1-x} \frac{1}{1-t}
\end{align*}
\end{example}

    \subsection{The cycle-index generating function}
    Recall the \emph{cycle index} of the symmetric group $\mathfrak{S}_n$ which is defined as
\begin{align*}
Z(\mathfrak{S}_n) &= \frac{1}{n!} \sum_{w \in \mathfrak{S}_n} \prod_{i=1}^n x_i^{c_i(w)}
\end{align*}
It is well known, for example from \cite{cameron}, that the generating function $\Gamma$ of cycle indices of $\mathfrak{S}_n$ is given by
\begin{align*}
    \Gamma &= 1 + \sum_{n=1}^\infty Z(\mathfrak{S}_n)t^n
\end{align*}
can also be written as
\begin{equation} \label{cycindgen}
    \Gamma = \exp\left( \sum_{i=1}^\infty \frac{x_i t^i}{i} \right)
\end{equation}
This is a very useful result as indicated by the following example:
\begin{example}
Consider the formal equality
\begin{align*}
   1 + \sum_{n=1}^\infty Z(\mathfrak{S}_n)t^n  &= \exp\left( \sum_{i=1}^\infty \frac{x_i t^i}{i} \right)
\end{align*}
Evaluating the formal partial derivative $\frac{\partial }{ \partial x_k}$ of both sides at $x_i = 1$ for all $i=1,2, \cdots $, gives the following 
\begin{align*}
    \sum_{n=1}^\infty \bigg(  \frac{1}{n!} \sum_{w\in \mathfrak{S}_n} c_k(w)\bigg) t^n &=  \frac{1}{k} \frac{t^k}{1-t}
\end{align*}
which is saying that the expected number of $k$ cycles in a permutation of $\mathfrak{S}_n$ for $n \geq k$ equals $\frac{1}{k}$. This is a classical result in combinatorial probability theory \cite{bona}.
\end{example} 
Some other identities we would like to highlight here are
\begin{equation}
    \Gamma[x_i \longrightarrow x^i] = \frac{1}{1-xt} 
\end{equation}
and
\begin{equation} \label{smallid}
    \Gamma[x_i \longrightarrow 1-x^i] = \frac{1-xt}{1-t}
\end{equation}
where the notation $\Gamma[x_i \longrightarrow f(x_i)]$ means substituting $f(x_i)$ for $x_i$ in the expression for $\Gamma$.
    \subsection{Goupil's generating function identity}
        Going back to our identity \ref{hook3}, we consider the class function
\begin{align*}
    \sum_{\mu \vdash j} (-1)^{\ell(\mu) +1 } \eta^\mu
\end{align*}
from the identity \ref{gencrackless}, we have the following generating function
\begin{align*}
    \sum_{j=0}^\infty \left(\sum_{\mu \vdash j} (-1)^{\ell(\mu) +1 } \eta^\mu \right) t^j &= \prod_{i=1}^\infty \bigg( 1 - (-t)^i\bigg)^{c_i}
\end{align*}
Using this and theorem \ref{hook2} we have another proof of the following identity of Goupil \cite{goupil}  for generating function for hook characters
\begin{theorem}
For $\lambda = (1^k) \vdash k$, we have the following generating function
\begin{equation} \label{alt}
    \sum_{k=0}^\infty q_{(1^k)} t^k \doteq \frac{1}{1+t} \prod_{i=1}^\infty \bigg( 1 - (-t)^i\bigg)^{c_i}
\end{equation}
\end{theorem}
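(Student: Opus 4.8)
The plan is to read off $q_{(1^k)}$ as the coefficient of $t^k$ on the left and to recognize the right-hand side as the image of a known generating function under a signed cumulative-sum operator. Throughout, $\doteq$ must be interpreted coefficientwise in $\mathbb{Q}[c_1,c_2,\dots][[t]]$: the coefficient of $t^k$ on the left is the character polynomial $q_{(1^k)}$, which by definition agrees with the class function $\chi^{(1^k)[n]}$ for every $n\geq 2k$, so it suffices to establish equality of the $t^k$-coefficients as polynomials in the $c_i$.

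First I would set $S_j := \sum_{\mu\vdash j}(-1)^{\ell(\mu)}\eta^\mu$ and combine Theorem \ref{hook2} with Theorem \ref{hook3}. Theorem \ref{hook2} expresses $\chi^{(1^k)[n]}$ as a signed sum over $j=0,\dots,k$ of the inner composition-sums $\sum_{\mu\vDash j}(-1)^{\ell(\mu)}\zeta^\mu$, and Theorem \ref{hook3} collapses each such composition-sum to the partition-sum $S_j$. Keeping careful track of the global sign coming from $\sgn(\sigma)$ in Doubilet's formula, the outcome is that $q_{(1^k)}$ is an alternating cumulative sum of the $S_j$ carrying an overall factor $(-1)^k$, namely $q_{(1^k)} = (-1)^k\sum_{j=0}^k S_j$.

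Next I would compute the generating function of the $S_j$ by themselves. Since $\eta^\mu = \binom{c}{\mu}$ and $\ell(\mu)=\sum_i m_i(\mu)$, inserting the alternating weight $(-1)^{\ell(\mu)}$ into \ref{gencrackless} simply replaces each per-part factor $(1+t^i)$ by $(1-t^i)$, so that $\sum_{j=0}^\infty S_j t^j = \prod_{i=1}^\infty (1-t^i)^{c_i}$ in $\mathbb{Q}[c_1,c_2,\dots][[t]]$.

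Finally I would apply the cumulative-sum operation. Using the standard identity $\sum_k\bigl(\sum_{j=0}^k a_j\bigr)x^k = \tfrac{1}{1-x}\sum_j a_j x^j$ with $a_j=S_j$ and $x=-t$, the factor $(-1)^k$ turns the cumulative sum into multiplication by $\tfrac{1}{1+t}$ together with the substitution $t\mapsto -t$ in the series for $\sum_j S_j t^j$, giving
\[
\sum_{k=0}^\infty q_{(1^k)}\,t^k = \frac{1}{1+t}\sum_{j=0}^\infty S_j(-t)^j = \frac{1}{1+t}\prod_{i=1}^\infty \bigl(1-(-t)^i\bigr)^{c_i},
\]
which is the asserted identity. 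I expect the only genuinely delicate point to be the sign bookkeeping in the second step: one must verify that the signs $\sgn(\sigma)$ attached to the Doubilet permutations in Theorem \ref{hook2}, together with the alternating signs $(-1)^{\ell(\mu)}$, combine into precisely the global $(-1)^k$ that matches the substitution $t\mapsto -t$ used at the end; the remaining manipulations are routine formal-power-series algebra.
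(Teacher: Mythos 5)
Your proof is correct and follows essentially the same route as the paper: combine Theorems \ref{hook2} and \ref{hook3} to write $\chi^{(1^k)[n]}$ as a signed cumulative sum of the partition-sums $S_j=\sum_{\mu\vdash j}(-1)^{\ell(\mu)}\eta^\mu$, read their generating function off equation~\ref{gencrackless}, and convert the cumulative sum into the factor $\frac{1}{1+t}$ together with the substitution $t\mapsto -t$. If anything, your explicit intermediate formula $q_{(1^k)}=(-1)^k\sum_{j=0}^k S_j$ handles the sign bookkeeping more carefully than the paper's own intermediate statements, which are not internally consistent about the overall sign.
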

\begin{proof}
Since from theorem \ref{hook2}, we have
\begin{align*}
    \chi^{\lambda[n]} &= \sum_{j=0}^k (-1)^{j+1} \left(  \sum_{\mu \vDash j} (-1)^{\ell(\mu)} \zeta^\mu \right)
\end{align*}
which corresponds to alternating sum of class function whose generating function is given by equation \ref{alt}. Now recall that the generating function for alternating sum of a sequence can be constructed by multiplying the generating function by $\frac{1}{1+t}$. This gives the required result.
\end{proof}

\begin{example}
From the above expression we derive expressions for first few character polynomials. A table of these can also be found in \cite{kerberbook}
\begin{align*}
    q_{()} &= 1 \\
    q_{(1)} &= \binom{c_1}{1} - 1 \\
    q_{(1^2)} &= \binom{c_1}{2} - \binom{c_2}{1} - \binom{c_1}{1} + 1 \\
    q_{(1^3)} &= \binom{c_1}{2} - \binom{c_1}{1} \binom{c_2}{1} + \binom{c_3}{1} - \binom{c_1}{2} + \binom{c_2}{1} + \binom{c_1}{1} - 1
\end{align*}
\end{example}

    \subsection{Rosas' formula for certain Kronecker coefficients}
         Let $\lambda, \mu$ and $\nu$ be partitions of $n$. The Kronecker coefficients $g^\lambda_{\mu \nu }(\mathfrak{S}_n)$ are defined as the coefficient of $\chi^\lambda$ in the expansion of $\chi^\mu\chi^\nu$ into irreducible characters:
\begin{align*}
g^{\lambda}_{\mu\nu}({\mathfrak{S}_n}) = \left< \chi^\lambda, \chi^\mu\chi^\nu\right>_{\mathfrak{S}_n} = \frac{1}{n!} \sum_{\sigma \in \mathfrak{S}_n} \chi^\lambda (\sigma) \chi^\mu(\sigma) \chi^\nu(\sigma)
\end{align*}
Using the equation \ref{alt} for generating function of irreducible hook character, here we will derive the formula for Kronecker coefficients indexed by hooks given by Rosas \cite{rosas}, which says:
\begin{theorem}
Let $\overline{g}_{k_1 k_2 k_3}$ be the reduced Kronecker coefficients corresponding to the triple $((n-k_1, 1^{k_1}), (n-k_2, 1^{k_2}), (n-k_3, 1^{k_3}))$, then
\begin{align*}
\sum_{k_1, k_2, k_3} \overline{g}_{k_1 k_2 k_3} x^{k_1} y^{k_2 } z^{k_3} &= \frac{1 + xyz}{(1-xy)(1-yz)(1-xz)}
\end{align*}
\end{theorem}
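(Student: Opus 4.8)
The plan is to read off the whole generating function as a single stabilized average over the symmetric group, feeding Goupil's identity \ref{alt} in for each of the three hook characters. Recall that the reduced Kronecker coefficient $\overline{g}_{k_1 k_2 k_3}$ is, by definition, the stable value (as $n \to \infty$) of
\begin{align*}
g^{(n-k_3,1^{k_3})}_{(n-k_1,1^{k_1}),(n-k_2,1^{k_2})}(\mathfrak{S}_n) &= \frac{1}{n!}\sum_{\sigma \in \mathfrak{S}_n} \chi^{(n-k_1,1^{k_1})}(\sigma)\,\chi^{(n-k_2,1^{k_2})}(\sigma)\,\chi^{(n-k_3,1^{k_3})}(\sigma).
\end{align*}
First I would introduce three bookkeeping variables $x,y,z$ and form the product of the three Goupil generating functions, so that the coefficient of $x^{k_1}y^{k_2}z^{k_3}$ is the class polynomial $q_{(1^{k_1})}q_{(1^{k_2})}q_{(1^{k_3})}$, which for $n$ large enough evaluates at $\sigma$ to the product of the three hook characters above. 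Thus $\sum \overline{g}_{k_1k_2k_3}x^{k_1}y^{k_2}z^{k_3}$ is the $n \to \infty$ stabilization of the average of this triple product.

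Using \ref{alt} for each factor, the product of the three right-hand sides factors, for a fixed $\sigma$, as
\begin{align*}
\frac{1}{(1+x)(1+y)(1+z)}\prod_{i=1}^\infty a_i^{\,c_i(\sigma)}, \qquad a_i := \bigl(1-(-x)^i\bigr)\bigl(1-(-y)^i\bigr)\bigl(1-(-z)^i\bigr).
\end{align*}
Since the prefactor is independent of $\sigma$, averaging over $\mathfrak{S}_n$ reduces to averaging $\prod_i a_i^{c_i(\sigma)}$, which is precisely the cycle index of $\mathfrak{S}_n$ evaluated at $x_i = a_i$; by \ref{cycindgen} this equals $[t^n]\exp\!\left(\sum_i a_i t^i/i\right)$.

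Next I would isolate the divergent part by writing
\begin{align*}
\exp\left(\sum_{i=1}^\infty \frac{a_i t^i}{i}\right) = \frac{1}{1-t}\,\exp\left(\sum_{i=1}^\infty \frac{(a_i-1)t^i}{i}\right),
\end{align*}
and observe that, for each fixed monomial in $x,y,z$, the second exponential is a polynomial in $t$ because $a_i - 1$ has total $(x,y,z)$-degree at least $i$, so only finitely many $i$ contribute. The stabilization of $p(t)/(1-t)$ to $p(1)$ from the earlier subsection then identifies the stable average with $\exp\!\left(\sum_i (a_i-1)/i\right)$. Expanding $a_i - 1$ into its four homogeneous pieces and summing each resulting logarithmic series term by term via $\sum_i s^i/i = -\log(1-s)$ gives
\begin{align*}
\exp\left(\sum_{i=1}^\infty \frac{a_i-1}{i}\right) = \frac{(1+x)(1+y)(1+z)(1+xyz)}{(1-xy)(1-yz)(1-xz)}.
\end{align*}
Multiplying by the prefactor $\tfrac{1}{(1+x)(1+y)(1+z)}$ cancels the numerator factors and yields exactly Rosas' formula.

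I expect the main obstacle to be conceptual rather than computational: carefully justifying the interchange of the limit $n \to \infty$ with coefficient extraction in $x,y,z$, i.e. that $\overline{g}_{k_1k_2k_3}$ really is the $x^{k_1}y^{k_2}z^{k_3}$-coefficient of the stable power series, and that for each such fixed monomial only finitely many cycle lengths $i$ enter, so that every substitution and logarithmic summation is legitimate. This rests on the stability framework of the earlier subsection together with the fact that $q_{(1^k)}$ computes $\chi^{(n-k,1^k)}$ for all sufficiently large $n$. Once that bookkeeping is pinned down, the remaining steps are the routine logarithmic manipulations indicated above.
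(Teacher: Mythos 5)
Your proposal is correct and follows essentially the same route as the paper: both reduce the triple product of hook characters, via Goupil's identity, to the cycle index $Z(\mathfrak{S}_n)$ evaluated at $x_i=(1-(-x)^i)(1-(-y)^i)(1-(-z)^i)$, pass to the generating function $\exp\bigl(\sum_i a_i t^i/i\bigr)$, and extract the stable coefficient of $\frac{1}{1-t}$ times a rational function. The only cosmetic difference is that you re-derive the closed form by summing the logarithmic series for each homogeneous piece of $a_i-1$, whereas the paper invokes its pre-packaged identity $\Gamma[x_i\to 1-x^i]=\frac{1-xt}{1-t}$; the cancellation of $(1+x)(1+y)(1+z)$ against the prefactor and the surviving $(1+xyz)$ numerator come out identically.
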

\begin{proof}
We start with the substituion $x_i \leftarrow (1-(-x)^i)$ in the cycle index function
\begin{align*}
Z(\mathfrak{S}_n)[x_i \leftarrow (1-(-x)^i)] &= \frac{1}{n!} \sum_{w \in \mathfrak{S}_n} \prod_{i=1}^n  (1-(-x)^i)^{a_i(w)} \\
[x^k] \frac{1}{1+x}Z(\mathfrak{S}_n)[x_i \leftarrow (1-(-x)^i)] &= \frac{1}{n!} \sum_{w \in \mathfrak{S}_n} \chi^{(1^k)[n]}(w)
\end{align*}
This suggests that if we let $Y_n = Z(\mathfrak{S}_n)[x_i \leftarrow (1-(-x)^i)(1-(-y)^i)(1-(-z)^i)]$, we have
\begin{align*}
    [x^{k_1} y^{k_2} z^{k_3}] \frac{1}{(1+x)(1+y)(1+z)}Y_n &= \frac{1}{n!} \sum_{w \in \mathfrak{S}_n} \chi^{(1^{k_1})[n]}(w) \chi^{(1^{k_2})[n]}(w) \chi^{(1^{k_3})[n]}(w) \\
    &= g_{k_1 k_2 k_3} 
\end{align*}
Now notice that $Y := \sum_{n=0}^\infty Y_n t^n$ is given by substituting $[x_i \leftarrow (1-(-x)^i)(1-(-y)^i)(1-(-z)^i)]$ in $\Gamma$ which using equation \ref{cycindgen} becomes
\begin{align*}
    Y &= \exp\left( \sum_{i=1}^\infty \frac{x_i}{i} t^i \right)[x_i \leftarrow (1-(-x)^i)(1-(-y)^i)(1-(-z)^i)]
\end{align*}
Now using equation \ref{smallid}, we have
\begin{align*}
    Y &= \frac{1}{1-t} \frac{(1+xyzt) (1+xt)(1+yt)(1+zt)}{(1-xyt)(1-yzt)(1-xzt)}
\end{align*}
Note that the identity
\begin{align*}
     W := \frac{1}{1-t} \frac{1}{(1-xyt)(1-yzt)(1-xzt)} &= \left( \sum_{i=0}^\infty \left( \sum_{j=0}^n\sum_{a + b + c = j} (xy)^a(yz)^b (xz)^c\right) t^n\right)
\end{align*}
implies that $W$ as member of $\mathbb{Q}[x,y,z][[t]]$ stabilize to
\begin{align*}
    \frac{1}{1-t}  \frac{1}{(1-xy)(1-yz)(1-xz)}
\end{align*}
which in turn implies that $\frac{1}{(1+x)(1+y)(1+z)}Y$ stabilizes to
\begin{align*}
   \frac{1}{1-t} \frac{(1+xyz)}{(1-xy)(1-yz)(1-xz)}
\end{align*}
but $\frac{1}{(1+x)(1+y)(1+z)}Y$ is the generating function of $g_{k_1 k_2 k_3}$. This implies that the generating function for the reduced Kronecker coefficients $\overline{g}_{k_1k_2k_3}$ is given by
\begin{align*}
  \frac{(1+xyz)}{(1-xy)(1-yz)(1-xz)}
\end{align*}
\end{proof}

\bibliographystyle{amsalpha}
\bibliography{main}

\end{document}